\tikzset{/tikz/notestyleraw/.append style={text=black}}
\newtheorem{thm}{Theorem}[section]
\newtheorem{lem}[thm]{Lemma}
\newtheorem{defn}[thm]{Definition}
\newtheorem{prop}[thm]{Proposition}
\newtheorem{rmk}[thm]{Remark}
\newtheorem*{thmA}{Theorem A}
\newtheorem*{thmB}{Theorem B}
\newtheorem*{thmC}{Theorem C}
\newtheorem*{thmD}{Theorem D}
\newtheorem*{thmE}{Theorem E}
\newtheorem*{corF}{Corollary F}
\newcommand{\be}{\begin{eqnarray}}
\newcommand{\ee}{\end{eqnarray}}
\newcommand{\beq}{\begin{equation}}
\newcommand{\eeq}{\end{equation}}
\newcommand{\ben}{\begin{eqnarray*}}
\newcommand{\een}{\end{eqnarray*}}
\newcommand{\beal}{\begin{aligned}}
\newcommand{\enal}{\end{aligned}}
\newcommand{\gm}{\gamma}
\newcommand{\lb}{\lambda}
\newcommand{\N}{\mathbb{N}}
\newcommand{\cC}{\mathcal{C}}
\newcommand{\R}{\mathbb{R}}
\newcommand{\om}{\omega}
\newcommand{\Om}{\Omega}
\newcommand{\dt}{\delta}
\newcommand{\cS}{\mathcal{S}}
\newcommand{\cA}{\mathcal{A}}
\newcommand{\cK}{\mathcal{K}}
\newcommand{\cU}{\mathcal{U}}
\newcommand{\cT}{\mathcal{T}}
\newcommand{\wh}{\widehat }
\newcommand{\wt}{\widetilde }
\title{Smooth subsolutions of the discounted Hamilton-Jacobi equations}
\thanks{$\dagger$ Corresponding author}
\subjclass[2010]{35B40, 37J55 49L25, 70H20}
\keywords{discounted Hamilton-Jacobi equation, global attractor, viscosity solution, subsolution, Lax-Oleinik semigroup, Aubry-Mather theory}
\date{}
\begin{document}
\maketitle

\centerline{\scshape Xiyao Huang}
\medskip
{\footnotesize
\centerline{ School of Mathematics and Statistics,}
 \centerline{Nanjing University of Sciences and Technology, Nanjing 210094, China}
 \centerline{{\it Email:} 123130011633@njust.edu.cn}
}
\bigskip

\centerline{\scshape Liang Jin  $^\dagger$}
\medskip
{\footnotesize
\centerline{  School of Mathematics and Statistics,}
 \centerline{Nanjing University of Sciences and Technology, Nanjing 210094, China}
 \centerline{{\it Email:} jl@njust.edu.cn}
}
\bigskip

\centerline{\scshape Jianlu Zhang}
\medskip
{\footnotesize
\centerline{Hua Loo-Keng Key Laboratory of Mathematics \&}
 \centerline{Mathematics Institute, Academy of Mathematics and systems science}
 \centerline{Chinese Academy of Sciences, Beijing 100190, China}
  \centerline{{\it Email:} jellychung1987@gmail.com}
}
\bigskip

\centerline{\scshape Kai Zhao}
\medskip
{\footnotesize
\centerline{School of Mathematical Sciences,}
\centerline{ Key Laboratory of Intelligent Computing and Applications(Ministry of Education),}
\centerline{Tongji University, Shanghai 200092, China;}
  \centerline{{\it Email:}  zhaokai93@tongji.edu.cn}
}
\bigskip

\begin{abstract}
For the discounted Hamilton-Jacobi equation 
\[
\lb u+H(x,d_{x}u)=0,\quad\lb>0
\]
 on compact, boundless manifold $M$, we construct $C^{1,1}$ subsolutions which are exactly solutions on the projected Aubry set. The smoothness of such subsolutions can be improved if additional hyperbolicity of the Aubry set is assumed. As applications, 
such subsolutions can be used to identify the maximal global attractor of the associated conformally symplectic flow, and to control the convergent speed of the Lax-Oleinik semigroup.

\end{abstract}

\vspace{20pt}


\section{Introduction}\label{s1}
\vspace{20pt}


Let $M$ be a smooth, closed Riemannian manifold. A function $H\in C^{k}(T^*M,\R)$ ($k\geq 2$) is called a {\sf Tonelli Hamiltonian} if for all $x\in M$,
\begin{enumerate}
  \item[\bf (H1)] {\sf (Positive Definite)} $H_{pp}(x,p)$ is positive definite for all $(x,p)\in T^*M$;
  \item[\bf (H2)] {\sf (Superlinear)} $\lim_{|p|_{x}\rightarrow+\infty}H(x,p)/|p|_{x}=+\infty$, where $|\cdot|_{x}$ is the norm on $T^{\ast}_{x}M$ induced by the Riemannian metric.
\end{enumerate}

For a fixed constant $\lb>0$, we consider the ODE system on $T^{\ast}M$ associated with $H$, which can be expressed in coordinates as
\be\label{eq:ode0}
\left\{
\begin{aligned}
\dot{x}&=H_p(x,p),\\
\dot p&=-H_x(x,p)-\lb p.
\end{aligned}
\right.
\ee
Physically, this system describes the mechanical motion of masses suffering friction proportional to the velocity. The earliest research of system \eqref{eq:ode0} can trace back to Duffing's work on explosion engines \cite{D}. Systems of the form \eqref{eq:ode0} can also be
found in other subjects, e.g. astronomy \cite{CC}, transportation \cite{WL} and economics \cite{B}.  

\medskip

{Due to (H1)-(H2)}, the {\sf local phase flow} $\Phi_{H,\lb}^t$ of \eqref{eq:ode0} is forward complete, namely, it is well defined for all $t\in\R_+$. Besides, the direct computation shows that $\Phi_{H,\lb}^t$ transports the standard symplectic form $\Omega=dp\wedge dx$ into a multiple of itself:
\be\label{cs}
(\Phi^{t}_{H,\lb})^{*}\Omega= e^{\lb t}\Omega,\quad t\in\R_+.
\ee
That is why \eqref{eq:ode0} is also called {\sf conformally symplectic} \cite{WL} or {\sf dissipative} \cite{LC} in some literatures. We also mention that more general dissipative twist maps have been studied by Le Calvez \cite{LC} and Casdagli \cite{Ca}. Besides, a KAM iteration for \eqref{eq:ode0} has been established in \cite{CCD}.

Recently, variational significance was exploited \cite{CCJWY2019,CCWY2018,WWY2,WWY,WWY3,WWY4} for generalized $1^{st}$ order PDEs. These works share the same dynamic taste with our paper.

\subsection{Viscosity subsolutions of discounted H-J equations}
 In \cite{DFIZ,MS}, the authors used a variational principle to get the
{\sf viscosity solution} $u^-(x)\in C(M,\R)$ for the {\sf discounted Hamilton-Jacobi equation}
\begin{align}\label{eq:hj}
\lb u+H(x,du)=0,\quad\tag{D}\quad x\in M
\end{align}
and showed its optimality in deciding minimal curves. Due to the {\sf Comparison Principle}, such a viscosity solution is unique but usually not $C^{1}$. Precisely, we define the {\sf Legendre transformation} by
\[
\mathcal{L}_H:T^{\ast}M\rightarrow TM;(x,p)\mapsto(x,H_p(x,p))
\]
which is a diffeomorphism due to  (H1)-(H2). Accordingly, 
\be\label{eq:led}
L(x,v):=\max_{p\in T_{x}^*M}\big\{\langle p, v\rangle-H(x,p)\big\},
\ee
defines a {\sf Tonelli Lagrangian} function in $ C^{k}(TM,\R)$, where the maximum is attained at $\bar p\in T_x^*M $ such that $\bar p=L_v(x, v)$.

\begin{defn}\label{defn:subsol}
A function $u\in C(M,\R)$ is called {\sf $\lb-$dominated by $L$} and denoted by $u\prec_\lb L$, 
if for any absolutely continuous $\gm:[a,b]\rightarrow M$, there holds
\be\label{eq:dom}
e^{\lb b}u(\gm(b))-e^{\lb a}u(\gm(a))\leq \int_a^be^{\lb t}L(\gamma(t),\dot\gamma(t))\ dt.
\ee
We can denote by $\cS^-$ the set of all $\lb-$dominated functions of $L$.
\end{defn}
\begin{rmk}
Recall that any $\lb-$dominated function $u$ has to be Lipschitz (Proposition 6.3 of \cite{DFIZ}). By the {\sf Rademacher's theorem}
\[
\lb u(x)+H(x,du(x))\leq 0,\quad a.e. \  x\in M,
\]
which implies $u$ is an {\sf almost everywhere subsolution} of  (\ref{eq:hj}). On the other side, the equivalence between almost everywhere subsolution and {\sf viscosity subsolution} was proved in a bunch of references e.g. \cite{BC,Ba,F2,Si}. So we get the equivalence among the three:
\[
\text{a.e. subsolution} \Longleftrightarrow \text{viscosity subsolution} \Longleftrightarrow \lb-\text{dominated function}
\]
\end{rmk}
\smallskip

%

\begin{defn}\label{def:Aubry}
$\gm\in C^{ac}(\R,M)$ is called {{\sf globally calibrated}} by $u\in\cS^-$, if for any $a<b\in\R$, 
\[
e^{\lb b}u(\gm(b))-e^{\lb a} u(\gm(a))=\int_a^be^{\lb t}L(\gamma(t),\dot\gamma(t))dt.
\]
The {\sf Aubry set} $\wt \cA$ is defined by
\be\label{defn:aub}
\wt\cA=\bigcup_{u\in\cS^-}\bigcup_{\gamma\in C^{ac}(\R,M)}\,\,\{(\gamma, \dot\gamma)|\gamma \text{ is globally calibrated by }u\}\subset TM
\ee
and the {\sf projected Aubry set} can be defined by $\cA=\pi\wt\cA\subset M$, where $\pi:TM\rightarrow M$ is the canonical projection.
\end{defn}

\begin{rmk} 


As described in Theorem (ii) of \cite{MS}, $\wt\cA$ is nonempty and $\Phi_{L,\lb}^t-$invariant. Moreover, $\pi^{-1}:\cA\rightarrow \wt\cA\subset TM$ is a Lipschitz graph.
\end{rmk}

The relation between $u^-$ and $\cS^-$ can be revealed by the following conclusion:

\begin{thm}[proved in Appendix \ref{a2}]\label{thm:0}
The viscosity solution of (\ref{eq:hj}) is the pointwise supreme of all smooth, i.e., $C^{\infty}$, viscosity subsolutions, namely we have
\be
u^-(x)=\sup_{u\in \cS^-}u(x)=\sup_{u\in C^{\infty}\cap\cS^-}u(x).
\ee
\end{thm}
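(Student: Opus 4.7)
The plan is to prove the two equalities separately. For the first, $u^-(x) = \sup_{u\in\cS^-} u(x)$: one inclusion is free because the viscosity solution $u^-$ is Lipschitz and an almost-everywhere subsolution of \eqref{eq:hj}, hence lies in $\cS^-$ by the equivalences recorded after Definition \ref{defn:subsol}. For the reverse, I would use the Lax--Oleinik representation
\[
u^-(x)=\inf\Big\{\int_{-\infty}^{0}e^{\lb t}L(\gm(t),\dot\gm(t))\,dt\ :\ \gm\in C^{ac}((-\infty,0],M),\ \gm(0)=x\Big\},
\]
whose minimizer exists by Tonelli compactness under (H1)--(H2). For any $u\in\cS^-$ and any admissible $\gm$, integrating the domination inequality on $[a,0]$ yields $u(x)\leq e^{\lb a}u(\gm(a))+\int_a^0 e^{\lb t}L(\gm,\dot\gm)\,dt$; compactness of $M$ makes $u$ bounded, so $e^{\lb a}u(\gm(a))\to 0$ as $a\to-\infty$, and taking the infimum over $\gm$ gives $u(x)\leq u^-(x)$.

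For the second equality it suffices to show that, for every $x_0\in M$ and every $\eps>0$, some $u_\eps\in C^{\infty}\cap\cS^-$ satisfies $u_\eps(x_0)\geq u^-(x_0)-\eps$. The strategy is a two-step regularization starting from $u^-$. First, pass to the strict subsolution $u^-\!-\eta$, which satisfies $\lb(u^-\!-\eta)+H(x,d(u^-\!-\eta))\leq -\lb\eta$ almost everywhere. Second, mollify $u^-\!-\eta$ on $M$ via a finite atlas $\{(U_i,\varphi_i)\}$ and a subordinate partition of unity $\{\chi_i\}$: let $w_i$ be the Euclidean mollification of $u^-\!-\eta$ read in chart $U_i$ with parameter $\dt$, and set $v_{\eta,\dt}:=\sum_i\chi_i w_i$. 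Since $u^-$ is Lipschitz, $\|v_{\eta,\dt}-(u^-\!-\eta)\|_\infty$ and the pointwise deviation of $dv_{\eta,\dt}$ from $d(u^-\!-\eta)$ at points of differentiability are $O(\dt)$; combined with the modulus of continuity of $H$ on the compact envelope $\{(x,p):|p|_x\leq\mathrm{Lip}(u^-)+1\}$, this yields $\lb v_{\eta,\dt}+H(x,dv_{\eta,\dt})\leq -\lb\eta+C\dt$ pointwise, for a constant $C$ independent of $\eta$. Choosing $\dt$ with $C\dt<\lb\eta$ gives a genuine smooth subsolution with $\|v_{\eta,\dt}-u^-\|_\infty\leq\eta+O(\dt)$, which is $\geq u^-(x_0)-\eps$ once $\eta,\dt$ are small enough.

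The main technical obstacle is that chart-wise mollification combined through a partition of unity does not a priori preserve the nonlinear subsolution inequality, since $dv_{\eta,\dt}=\sum_i(w_i\,d\chi_i+\chi_i dw_i)$ contains cross terms involving $d\chi_i$. I would exploit convexity of $H$ in $p$: the inequality $H(x,\sum_i\chi_i dw_i)\leq \sum_i\chi_i H(x,dw_i)$ handles the convex-combination piece, while the residual $w_i d\chi_i$ terms contribute an error controlled by $|w_i-w_j|=O(\dt)$ on overlaps (using $\sum d\chi_i=0$), to be absorbed into the strict-subsolution slack $\lb\eta$. A cleaner alternative, which also avoids tracking overlap constants, is to first reduce to a global $C^{1,1}$ subsolution via the construction of the paper's main theorem and then apply a single chart-wise mollification to that $C^{1,1}$ intermediate, where the essentially bounded Hessian gives one-shot quantitative control of the smoothing error by a universal constant times $\dt$.
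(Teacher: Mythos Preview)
Your proposal is correct, but it takes a different route from the paper's proof in both halves.

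For the first equality, the paper quotes the viscosity comparison principle (their Lemma~\ref{cp}, i.e.\ \cite[Theorem 2.5]{DFIZ}): since $u^-$ is a supersolution of \eqref{eq:hj}, every subsolution lies below it. Your argument instead uses the variational representation of $u^-$ and the domination inequality directly, letting $a\to-\infty$ and exploiting boundedness of $u$ on the compact $M$. Both are valid; yours is more self-contained and avoids invoking an external comparison result, while the paper's is a one-line citation.

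For the second equality, the paper's trick is to \emph{freeze} the zeroth-order term: set $G(x,p):=\lb u^-(x)+H(x,p)$, observe that $v:=u^- - c\delta$ (for a suitable $c$) is a subsolution of $G(x,dv)=0$, and then apply the $u$-independent smoothing lemma of \cite[Lemma 2.2]{DFIZ} to $G$ as a black box. A short algebraic check then shows that the resulting $C^\infty$ approximant $v_\varepsilon$ is a genuine subsolution of the original equation $\lb u+H(x,du)=0$, with $u^- -\delta \le v_\varepsilon \le u^-$. This sidesteps entirely the partition-of-unity bookkeeping you describe. Your approach---mollify $u^--\eta$ directly, use convexity of $H$ in $p$ to handle the convex-combination term $\sum_i\chi_i\,dw_i$, and absorb the overlap errors $\sum_i w_i\,d\chi_i=\sum_i(w_i-w_j)\,d\chi_i=O(\delta)$ into the slack $\lb\eta$---is essentially what the proof of \cite[Lemma 2.2]{DFIZ} does internally, so you are reproving that lemma in situ while simultaneously handling the $\lb u$ term. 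This is heavier but perfectly sound. Your alternative of first passing to a $C^{1,1}$ subsolution via $\cT_s^-\cT_t^+u^-$ is also legitimate (Theorem~B does not rely on Theorem~\ref{thm:0}), though you would still need to subtract a small constant to create strict-subsolution slack before mollifying.
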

\medskip

\subsection{Constrained subsolutions \& Main results} 

Notice that any $w\in C^{1}(M,\R)$ far smaller than $0$ can be a subsolution of \eqref{eq:hj}. Therefore, much of the elements in $\cS^-$ can't tell us any information about $\wt\cA$. For this purpose, 
we need a further selection in $\cS^-$. \smallskip

Let us denote
$\mathfrak M_\lb$ by the set of all $\Phi_{L,\lb}^t$-invariant measures (see \eqref{eq:e-l} for the definition). It is non-empty, since 
 $\wt\cA$ always supports at least one invariant probability measure, due to the {\sf Krylov-Bogolyubov's theorem}. That gives us a chance to define a special kind of subsolutions:
\begin{defn}\label{con-sol}
$u\in\cS^-$ is called a {\sf constrained subsolution} of \eqref{eq:hj}, if
\be\label{eq:con}
\inf_{\mu\in\mathfrak M_\lb}\int L-\lb u\,\,d\mu=0.
\ee
We denote by $\cS_c^- \subset \cS^-$ the set of constrained subsolutions.
\end{defn}
Our conclusion reveals fine properties of the constrained subsolutions:

\begin{thmA}\label{prop:cs}
\
\begin{enumerate}
\item $u^-\in\cS_c^-$, which implies $\cS_c^-\neq\emptyset$.
\item For each $u\in\cS_c^-$, there exists an $\Phi_{L,\lb}^t-$invariant subset $\wt\cA(u)\subset\wt\cA$, such that $u=u^-$ on $\pi\wt\cA(u)$.
\end{enumerate}
\end{thmA}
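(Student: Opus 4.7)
The plan is to base both parts on a single inequality: for every $u\in\cS^-$ and every $\mu\in\mathfrak M_\lb$, one has $\int(L-\lb u)\,d\mu \ge 0$. To prove this directly from the domination relation without appealing to a.e.\ differentiability of the Lipschitz $u$, I would apply \eqref{eq:dom} with $a=0,\ b=T$ along a $\mu$-typical orbit $\gm$ of the Euler-Lagrange flow and integrate against $\mu$; invariance of $\mu$ turns $\int u(\gm(T))\,d\mu$ into $\int u\,d\mu$, and Fubini converts the double integral of $e^{\lb t}L$ into $\frac{e^{\lb T}-1}{\lb}\int L\,d\mu$. The inequality then collapses to $\lb\int u\,d\mu\le\int L\,d\mu$.

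For part (1), $u^-$ is a viscosity solution and thus lies in $\cS^-$. The Krylov-Bogolyubov construction applied to a compact invariant subset of $\wt\cA$ (cf.\ the remark after Definition~\ref{def:Aubry}) supplies $\mu\in\mathfrak M_\lb$ with $\mathrm{supp}\,\mu\subset\wt\cA$. Every orbit through $\mathrm{supp}\,\mu$ is globally calibrated by \emph{some} $u\in\cS^-$; a short comparison — write \eqref{eq:dom} for $u^-$ on $[a,b]$, subtract the calibration equality for $u$, and let $a\to-\infty$ using $e^{\lb a}\to 0$ and boundedness of $u-u^-$ — forces $u^-$ itself to calibrate these curves. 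Feeding this equality into the integration of the previous paragraph turns the key inequality into $\int(L-\lb u^-)\,d\mu=0$, so $u^-\in\cS_c^-$.

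For part (2), pick $\mu_n\in\mathfrak M_\lb$ with $\int(L-\lb u)\,d\mu_n\to 0$. Superlinearity (H2) combined with the dissipation in \eqref{eq:ode0} confines the supports of all invariant probabilities to a common compact subset of $TM$, so $\mathfrak M_\lb$ is weak-$*$ compact; a limit $\mu^*$ then satisfies $\int(L-\lb u)\,d\mu^*=0$ by lower semicontinuity. From $u\le u^-$ we get $L-\lb u\ge L-\lb u^-$ pointwise, so the two nonnegative integrals $\int(L-\lb u)\,d\mu^*$ and $\int(L-\lb u^-)\,d\mu^*$ are sandwiched between $0$ and $0$; taking their difference yields $\int\lb(u^--u)\,d\mu^*=0$, and continuity of $u^--u\ge 0$ forces $u=u^-$ on $\pi\,\mathrm{supp}\,\mu^*$. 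To place $\mathrm{supp}\,\mu^*$ inside $\wt\cA$, I would revisit the integration with $u^-$ in place of $u$: the nonnegative excess $\Phi_T(x,v):=\int_0^T e^{\lb(t-T)}L(\gm,\dot\gm)\,dt - u^-(\gm(T))+e^{-\lb T}u^-(x)$ integrates to $0$ against $\mu^*$ for every $T>0$, hence vanishes $\mu^*$-a.e.; a countable-$T$ argument together with continuity and the bilateral completeness of the flow on $\mathrm{supp}\,\mu^*$ upgrades this to a global calibration of the orbit by $u^-$. Thus $\wt\cA(u):=\mathrm{supp}\,\mu^*$ meets all requirements.

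The main technical obstacles I anticipate are (i) the weak-$*$ compactness and the lower semicontinuity of $\mu\mapsto\int(L-\lb u)\,d\mu$ needed to select $\mu^*$, which depend on the a priori support bound for invariant measures in $\mathfrak M_\lb$, and (ii) the fact that a Lipschitz $u\in\cS^-$ need not be differentiable on a $\pi_*\mu$-full set of $M$, which is precisely what forces the arguments to run through the integrated domination relation \eqref{eq:dom} rather than the pointwise subsolution inequality.
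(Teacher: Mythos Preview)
Your proposal is correct and follows the same overall architecture as the paper's argument: establish $\int(L-\lb u)\,d\mu\ge 0$ for every $u\in\cS^-$ and $\mu\in\mathfrak M_\lb$, then locate a measure realising equality and read off both $u=u^-$ on its projected support and the inclusion of that support in $\wt\cA$. The differences are in how the pieces are obtained.

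For the key inequality, the paper does not integrate \eqref{eq:dom} as you do; instead it feeds the integral representation \eqref{eq:sta} of $u^-$ into $\int\lb u^-\,d\nu$, swaps the order of integration, and uses $\Phi_{L,\lb}^t$-invariance of $\nu$ to collapse everything to $\int L\,d\nu$. Your route via the domination relation is more elementary and sidesteps the need to know that backward orbits through $\mathrm{supp}\,\nu$ exist for all negative time and that $u^-(x)\le\int_{-\infty}^0 e^{\lb s}L(\Phi^s_{L,\lb}(x,v))\,ds$ there (the paper's chain \eqref{eq:mes-cal} tacitly uses this). For the fact that orbits in $\wt\cA$ are calibrated by $u^-$ itself (not merely by \emph{some} $u\in\cS^-$), your subtraction argument with $e^{\lb a}\to 0$ is a clean direct proof; the paper instead invokes the equivalence of the two definitions of the Aubry set established in Appendix~\ref{a3}. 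Finally, for part~(2) the paper simply writes ``if there is a $\mu\in\mathfrak M_\lb$ such that each step in \eqref{eq:mes-cal} becomes an equality'' and proceeds, without spelling out the weak-$*$ compactness argument you give to secure such a $\mu^*$; your explicit treatment of the attainment issue and of the inclusion $\mathrm{supp}\,\mu^*\subset\wt\cA$ via the excess functional $\Phi_T$ is more thorough. In short, both proofs share the same skeleton, but yours is more self-contained while the paper leans on the formula \eqref{eq:sta} and on its appendix.
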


Moreover, such constrained subsolutions can be made differentiable:

\begin{thmB}\label{thm:b}
There exists a sequence of $u_{n\in\N}\in C^{1,1}(M,\R)\cap\cS_c^-$ which is exactly a solution on the projected Aubry set $\cA$ and converges to $u^-$ as $n\rightarrow+\infty$ w.r.t. the $C^0-$norm.
\end{thmB}

The smoothness of constrained subsolutions can be further improved, if additional hyperbolicity of $\wt\cA$ is supplied:

\begin{thmC}\label{thm:c}
Assume $\wt\cA$ consists of finitely many hyperbolic equilibria or periodic orbits, then there exists a sequence $u_{n\in\N}\in C^k(M,\R)\cap\cS_c^-$ converging to $u^-$ as $n\rightarrow+\infty$ w.r.t. the $C^0-$norm, such that for any $n\in \N$,
\begin{enumerate}
	\item  $u_n=u^-$ on $\cA$ . 
	\item $\lb u_n(x)+H(x,du_n(x))<0$ for $x\notin\cA$.
\end{enumerate} 
\end{thmC}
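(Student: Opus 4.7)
My plan is to combine the hyperbolic stable-manifold theorem for the conformally symplectic flow $\Phi^t_{H,\lb}$ with the $C^{1,1}$ subsolution from Theorem B. Under the hyperbolicity assumption, each component $\Gamma_i\subset\wt\cA$ carries a $C^k$ local stable manifold $W^s_{\mathrm{loc}}(\Gamma_i)\subset T^*M$. The conformal identity \eqref{cs} forces $\Omega$ to vanish on the stable tangent subspace $E^s$, because $\Omega(D\Phi^t v,D\Phi^t w)=e^{\lb t}\Omega(v,w)$ cannot grow exponentially while $v,w$ contract exponentially, so $W^s_{\mathrm{loc}}(\Gamma_i)$ is Lagrangian. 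Since $\wt\cA$ is a Lipschitz graph over $\cA$ by the remark following Definition \ref{def:Aubry}, the Lagrangian subspace $E^s$ is transverse to the vertical cotangent fibers along $\Gamma_i$, and $W^s_{\mathrm{loc}}(\Gamma_i)$ projects $C^k$-diffeomorphically onto an open neighborhood $V_i\subset M$ of $\pi\Gamma_i$ as the graph of $d\phi_i$ for some $\phi_i\in C^k(V_i,\R)$. Normalizing by $\phi_i=u^-$ on $\pi\Gamma_i$ and using $\Phi^t_{H,\lb}$-invariance of $W^s_{\mathrm{loc}}(\Gamma_i)$ gives the pointwise identity $\lb\phi_i+H(x,d\phi_i)\equiv 0$ on $V_i$.

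The next step is to produce a global $C^k$ function $u_\star$ that equals each $\phi_i$ near $\pi\Gamma_i$ and is a strict subsolution on $M\setminus\cA$. On a smaller neighborhood $V_i'\Subset V_i$, set $u_\star:=\phi_i$. On the complement $K:=M\setminus\bigcup_i V_i'$, take a standard mollification $\tilde u$ of the $C^{1,1}$ subsolution from Theorem B and subtract $\eps\,\mathrm{dist}(\cdot,\cA)^2$ with $\eps$ small (a Fathi--Siconolfi-type perturbation) to get a $C^k$ strict subsolution on a neighborhood of $K$. On each annulus $V_i\setminus\overline{V_i'}$, interpolate with a smooth cutoff $\chi$ by $u_\star:=\chi\phi_i+(1-\chi)\tilde u$; convexity of $H(x,\cdot)$ yields
\[
\lb u_\star+H(x,du_\star)\leq \chi\bigl[\lb\phi_i+H(x,d\phi_i)\bigr]+(1-\chi)\bigl[\lb\tilde u+H(x,d\tilde u)\bigr]+R(\chi,d\chi),
\]
where the error $R$ is controlled by $|d\chi|\cdot|\phi_i-\tilde u|$. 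Choosing $V_i'$ close enough to $\pi\Gamma_i$ makes $|\phi_i-\tilde u|$ uniformly small on the annulus (both converge to $u^-$ in the limit), letting $R$ be absorbed by the strict-subsolution margin of $\tilde u$.

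The sequence $\{u_i\}\subset\cS_c^-$ converging to $u^-$ is obtained by rerunning the construction along parameters $V_i'\downarrow\pi\Gamma_i$, $\eps_i\downarrow 0$, and mollification scale $\downarrow 0$ simultaneously. Each $u_i$ is $C^k$, equals $u^-$ on $\cA$ (so it lies in $\cS_c^-$, since any $\mu\in\mathfrak M_\lb$ is supported in $\wt\cA$ and hence $\int L-\lb u_i\,d\mu=\int L-\lb u^-\,d\mu=0$), and is a strict subsolution off $\cA$ by construction. The $C^0$-convergence $u_i\to u^-$ follows because the mollification converges to the $C^{1,1}$ subsolution of Theorem B, the Fathi--Siconolfi perturbation vanishes, the local pieces $\phi_i$ agree with $u^-$ on $\cA$, and Theorem B together with Theorem A give uniform control of the gap between that $C^{1,1}$ subsolution and $u^-$.

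The main obstacle is the gluing step in paragraph~2. The convex-combination subsolution inequality is standard, but the $|d\chi|$ error $R$ must be absorbed by a uniform strict-subsolution margin, which requires a delicate quantitative balance between the annulus thickness, the perturbation size $\eps$, and the mollification scale. The hyperbolicity hypothesis is essential precisely here: it yields $C^k$ (rather than merely $C^{1,1}$) extensions of $d\phi_i$ beyond $\pi\Gamma_i$ with quantitative Lipschitz control, without which the balancing step could not be carried out while simultaneously enforcing $u_i=u^-$ on $\cA$ and strictness off $\cA$.
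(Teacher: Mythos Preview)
Your construction has a genuine gap at the strictness requirement near $\cA$. The local primitives $\phi_i$ you build from the stable Lagrangian graphs satisfy $\lb\phi_i+H(x,d\phi_i)\equiv 0$ on all of $V_i$, as you correctly derive from invariance. Consequently, on the inner collar $V_i'\setminus\cA$, where $\chi\equiv 1$ and $u_\star=\phi_i$, your function is an \emph{exact} local solution, not a strict subsolution. The convex-combination estimate only yields strict negativity where $1-\chi>0$; on $V_i'\setminus\cA$ it gives exactly zero. Thus the conclusion $\lb u_i+H(x,du_i)<0$ for $x\notin\cA$ fails on the whole set $V_i'\setminus\cA$, and no balancing of annulus width against $|d\chi|$ repairs this. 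Shrinking $V_i'$ down to $\pi\Gamma_i$ does not help either: for each fixed $i$ the set $V_i'\setminus\cA$ is nonempty.

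The paper avoids this obstacle by a different mechanism: instead of patching a local solution onto a global subsolution, it perturbs the Hamiltonian to $\wt H=H+V$ with $V\in C^\infty$, $V\geq 0$, $V^{-1}(0)=\cA$, and $\|V\|_{C^k}$ small. Upper semicontinuity of the Aubry set (Lemma~\ref{lem:u-semi}) together with hyperbolicity forces $\wt\cA(\wt H)=\wt\cA(H)$. The viscosity solution $\wt u^-$ of the perturbed equation then satisfies $\lb\wt u^-+H(x,d\wt u^-)=-V(x)<0$ off $\cA$ automatically, and Lemma~\ref{lem:ck-gra} applied to $\wt H$ shows $\wt u^-$ is $C^k$ on a neighborhood $\cU$ of $\cA$ because the graph of $d\wt u^-$ lies on the \emph{unstable} manifold $W^u_{\mathrm{loc}}(\wt\cA)$. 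Outside $\cU$ one mollifies $\wt u^-$; the resulting error is absorbed by the uniform margin $V(x)\geq\inf_{M\setminus\cU}V>0$. Letting $V\to 0$ in $C^k$ produces the sequence converging to $u^-$. The point is that perturbing $H$ rather than $u$ manufactures strictness off $\cA$ while preserving both $\cA$ and $C^k$ regularity near it; your construction lacks any comparable device.

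Two secondary remarks. First, you use the stable manifold, whereas the relevant object for $u^-$ is the unstable one: backward calibrated curves stay near $\cA$ as $t\to-\infty$, so $(x,du^-(x))\in W^u_{\mathrm{loc}}$ (Lemma~\ref{lem:ck-gra}). Your justification that $E^s$ is transverse to the vertical fibers ``since $\wt\cA$ is a Lipschitz graph'' is not adequate: the graph property of $\wt\cA$ concerns only the direction tangent to the orbit, not the hyperbolic normal bundle. Second, your $C^0$-convergence paragraph asserts the mollification converges to the single $C^{1,1}$ subsolution of Theorem~B, but that function need not be $u^-$; you would have to run Theorem~B along a sequence $\cT_{s_n}^-\cT_{t_n}^+u^-$ with $s_n,t_n\to 0$ to approach $u^-$.
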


\medskip


\subsection{Applications of constrained subsolutions}

As the first application of constrained subsolutions, we show how to locate the {\sf maximal global attractor} of (\ref{eq:ode0}) by using elements in $\cS_c^-\cap C^{1}(M,\R)$. We will see that the differentiability plays a crucial role.

\begin{defn}
A compact $\Phi^t_{H,\lb}$-invariant set $\Om\subset T^*M$ is called a {\sf global attractor} of $\Phi^t_{H,\lb}$, if for any point $(x,p)\in T^*M$ and any open neighborhood $\cU$ of $\Om$, there exists $T(x,p,\cU)>0$ such that for all $t\geq T$, $\Phi_{H,\lb}^t(x,p)\in \cU$. Moreover, if $\Om$ is not contained in any larger global attractor, then it is called a {\sf maximal global attractor}.
\end{defn}

\begin{thmD}\label{thm:dthmD}
For any initial point $(x,p)\in T^*M$, the flow $\Phi_{H,\lb}^t(x,p)$ tends to a maximal global attractor $\cK$ as $t\rightarrow+\infty$. Moreover, $\cK$ can be identified as the forward intersectional set of the region
$
\Sigma_c^-:=\bigcap_{u\in\cS^-_c\cap C^1(M,R)}\big\{(x, p)\in T^*M\big| {\lb u(x)+H(x,p)}\leq 0\big\},$
i.e.
\[
\cK=\bigcap_{t\geq 0}\Phi_{H,\lambda}^t(\Sigma_c^-).
\]
\end{thmD}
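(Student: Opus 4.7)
The plan is to build, for each $u \in \cS_c^- \cap C^1(M,\R)$, an exponentially decaying Lyapunov function on $T^*M$ whose zero sublevel set is $\Sigma_u := \{(x,p) \in T^*M : \lb u(x) + H(x,p) \leq 0\}$, and then to read off the whole theorem from the resulting invariant family. Setting $V_u(x,p) := \lb u(x) + H(x,p)$, one computes along any trajectory $(x(t),p(t))$ of \eqref{eq:ode0}
\[
\frac{d}{dt} V_u = \lb\, du(x) \cdot H_p(x,p) - \lb\, p \cdot H_p(x,p) = \lb\, H_p(x,p) \cdot (du(x) - p),
\]
since the $H_x H_p$ terms in $\dot H$ cancel and leave only the friction contribution $-\lb p \cdot H_p$. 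Convexity of $H$ in $p$ gives $H_p(x,p)(du - p) \leq H(x,du) - H(x,p)$, while the pointwise subsolution inequality $H(x,du) \leq -\lb u$ (valid because $u \in C^1$) then yields $\dot V_u \leq -\lb V_u$ and hence the key Gronwall bound $V_u(t) \leq V_u(0)\,e^{-\lb t}$ for all $t \geq 0$.

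From this single estimate I would deduce the whole theorem. First, each $\Sigma_u$, and therefore $\Sigma_c^- = \bigcap_u \Sigma_u$, is forward invariant under $\Phi_{H,\lb}^t$; each $\Sigma_u$ is compact by the superlinearity (H2) and compactness of $M$, so $\Sigma_c^-$ is compact, and Theorem B provides a $C^{1,1} \subset C^1$ element of $\cS_c^-$ so the index family is non-empty. Second, for any initial datum $(x_0,p_0)$, the bound $V_u(t) \leq \max\{V_u(0), 0\}$ together with superlinearity keeps $|p(t)|$ in a compact region, so the positive semiorbit is relatively compact and $\omega(x_0,p_0) \neq \emptyset$; continuity of $V_u$ upgrades the decay $V_u(t) \leq V_u(0) e^{-\lb t} \to 0$ to $V_u \leq 0$ on $\omega(x_0,p_0)$ for every $u \in \cS_c^- \cap C^1$, so $\omega(x_0,p_0) \subset \Sigma_c^-$. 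Combined with the $\Phi_{H,\lb}^t$-invariance of the $\omega$-limit, this gives $\omega(x_0,p_0) \subset \bigcap_{t \geq 0} \Phi_{H,\lb}^t(\Sigma_c^-) = \cK$, and a standard compactness argument promotes this to the attraction property in the definition. Meanwhile $\cK$ is visibly compact and $\Phi_{H,\lb}^t$-invariant as a nested intersection of the forward iterates of a compact forward-invariant set.

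For maximality, let $\cK'$ be any global attractor: compactness and invariance give $\Phi_{H,\lb}^{-t}(z) \in \cK'$ for all $t \geq 0$ and $z \in \cK'$, so integrating $\dot V_u \leq -\lb V_u$ backward yields $V_u(\Phi_{H,\lb}^{-t}(z)) \geq V_u(z)\, e^{\lb t}$; if $V_u(z) > 0$ this would force $V_u$ to blow up on $\cK'$, contradicting compactness. Hence $V_u \leq 0$ on $\cK'$ for every $u \in \cS_c^- \cap C^1$, i.e.\ $\cK' \subset \Sigma_c^-$, and invariance propagates this to $\cK' \subset \Phi_{H,\lb}^t(\Sigma_c^-)$ for all $t \geq 0$, whence $\cK' \subset \cK$. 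The main conceptual obstacle has already been handled by Theorem B, which delivers the $C^1$ constrained subsolutions that make $V_u$ a genuine $C^1$ Lyapunov function on $T^*M$; the remaining delicate point within the argument above is that the decay $V_u(t) \leq V_u(0)e^{-\lb t}$ provides asymptotic rather than finite-time entry into $\Sigma_c^-$, which is why passing to $\omega$-limits is necessary and why $\cK$ must be expressed as the forward intersection of $\Sigma_c^-$ rather than as $\Sigma_c^-$ itself.
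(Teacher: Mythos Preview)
Your proof is correct and follows essentially the same approach as the paper: both define the Lyapunov function $V_u(x,p)=\lb u(x)+H(x,p)$ (the paper calls it $F_u$) and derive the key differential inequality $\dot V_u\leq -\lb V_u$ along trajectories, using convexity of $H$ in $p$ together with the $C^1$ subsolution inequality $H(x,du)\leq -\lb u$. Your write-up is in fact more complete than the paper's on the compactness of $\Sigma_u$, the existence of $\omega$-limits, and the maximality argument via backward-in-time blow-up; the only observation you omit that the paper records is the identification $\Sigma_c^-=\{(x,p):\lb u^-(x)+H(x,p)\leq 0\}$ coming from $u\leq u^-$ for all $u\in\cS_c^-$, but this is not needed for the theorem as stated.
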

\begin{rmk}
A similar conclusion as Theorem D was proved 
in \cite{MS}, where they used a complicated analysis to handle with the low regularity of $u^-$. Here we give a simplified poof with the help of $\cS_c^-\cap C^1(M,\R)$. 
\end{rmk}

Another application of $\cS_c^-\cap C^1(M,\R)$ is to control the convergent speed of the Lax-Oleinik semigroup, with a hyperbolic assumption of the Aubry set:
\begin{thmE}
Assume $\wt\cA$ is just a unique hyperbolic equilibrium $(x_0,0)\in TM$ with $\mu>0$ being the minimal positive Lyapunov exponent, then there exists $K>0$ which guarantees 
\be
\|\cT_t^-\mathbf{0}(x)-u^-(x)+e^{-\lb t}\alpha\|\leq Ke^{-(\mu+\lb)t},\quad\forall t\geq0.
\ee
where $\cT_t^-$ is the Lax-Oleinik semigroup operator (see (\ref{eq:evo}) for the definition) and 
\[
\alpha=\int_{-\infty}^0e^{\lb t}L(x_0,0)dt=u^-(x_0)
\]
is a definite constant.
\end{thmE}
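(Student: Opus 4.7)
The plan is to prove matching exponential upper and lower bounds of the form $\cT_t^- 0(x) - u^-(x) + e^{-\lb t}\alpha = O(e^{-(\mu+\lb)t})$ uniformly in $x$, by comparing the finite-horizon minimizer defining $\cT_t^- 0(x)$ with the infinite-horizon calibrated curve of $u^-(x)$, and exploiting the hyperbolic dynamics of $\Phi^t_{H,\lb}$ at the equilibrium. The preparatory ingredients are: by the stable-manifold theorem at the hyperbolic fixed point $(x_0, p_0) := (x_0, L_v(x_0, 0))$ of $\Phi^t_{H,\lb}$, there are smooth local manifolds $W^s, W^u$, with the graph of $du^-$ lying on $W^u$ near $x_0$ (since backward calibrated curves lift to orbits of $\Phi^t_{H,\lb}$ tending to $(x_0, p_0)$), so $u^-$ is $C^k$ on a neighborhood of $x_0$. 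For each $x \in M$ the unique calibrated curve $\gm_x^*: (-\infty, 0] \to M$ with $\gm_x^*(0) = x$ satisfies $|(\gm_x^*(s), \dot\gm_x^*(s)) - (x_0, 0)| \le Ce^{\mu s}$ for $s \le -T$, with $C, T$ uniform in $x$ by compactness of $M$. Theorem C also provides a $C^k$ subsolution $w \in \cS_c^-$ with $w(x_0) = u^-(x_0)$ and $\lb w + H(\cdot, dw) \le -\eta_U < 0$ off any prescribed neighborhood $U$ of $x_0$.

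For the upper bound, taking $\gm_x^*|_{[-t, 0]}$ as a test in the Lax-Oleinik infimum $\cT_t^- 0(x) = \inf_{\gm(0) = x} \int_{-t}^0 e^{\lb s} L(\gm, \dot\gm)\, ds$ and using $u^-(x) = \int_{-\infty}^0 e^{\lb s} L(\gm_x^*, \dot\gm_x^*)\, ds$ yields $\cT_t^- 0(x) \le u^-(x) - \int_{-\infty}^{-t} e^{\lb s} L(\gm_x^*, \dot\gm_x^*)\, ds$. Expanding $L(\gm_x^*, \dot\gm_x^*) = L(x_0, 0) + O(e^{\mu s})$ by Taylor expansion at the equilibrium combined with the decay rate, and using $\alpha = L(x_0, 0)/\lb$, the tail integral equals $e^{-\lb t}\alpha + O(e^{-(\mu+\lb)t})$, whence $\cT_t^- 0(x) - u^-(x) + e^{-\lb t}\alpha \le K e^{-(\mu+\lb)t}$.

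For the lower bound, let $\gm^{(t)}$ minimize $\cT_t^- 0(x)$ and set $y_t := \gm^{(t)}(-t)$. Extend $\gm^{(t)}$ backward to $(-\infty, -t]$ by the unique orbit in $W^u$ landing at $(y_t, V^u(y_t))$ at time $-t$, where $v = V^u(y)$ is the smooth graph function of $W^u$ over a neighborhood of $x_0$ in the base. The backward hyperbolic contraction on $W^u$ gives $|(\gm, \dot\gm) - (x_0, 0)| \le C\, d(y_t, x_0)\, e^{\mu(s+t)}$ on the tail, so its action contributes $e^{-\lb t}\alpha + O(d(y_t, x_0)\, e^{-\lb t})$. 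Since the concatenation is absolutely continuous on $(-\infty, 0]$ and ends at $x$, it is a valid competitor for $u^-(x)$, yielding $u^-(x) \le \cT_t^- 0(x) + e^{-\lb t}\alpha + C\, d(y_t, x_0)\, e^{-\lb t}$. The matching lower bound therefore reduces to the uniform endpoint estimate $d(y_t, x_0) \le K e^{-\mu t}$ for $x \in M$.

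This endpoint estimate is the hard part. My plan combines the free-endpoint transversality $L_v(y_t, \dot\gm^{(t)}(-t)) = 0$ (which places the Hamiltonian lift of $\gm^{(t)}$ on the zero section $\{p = 0\} \subset T^*M$ at time $-t$), the strict-subsolution-sharpened domination
\[
e^{\lb b} w(\gm(b)) - e^{\lb a} w(\gm(a)) \le \int_a^b e^{\lb s} L(\gm, \dot\gm)\, ds - \eta_U \int_{\{s \in [a, b]:\, \gm(s) \notin U\}} e^{\lb s}\, ds,
\]
which when fed with the upper bound from the second paragraph quantitatively forces $\gm^{(t)}$ to lie in $U$ for all but an $O(e^{-\lb t})$ portion of the discounted time near $-t$, and the local linear analysis of $\Phi^t_{H,\lb}$ at $(x_0, p_0)$, which shows that the preimage of the terminal lift $(x, p^{(t)}(0))$ along the zero section approaches $(x_0, 0)$ at precisely the minimal spectral rate $e^{-\mu t}$. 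Projecting to the base manifold and assembling these three pieces delivers the endpoint estimate and closes the proof.
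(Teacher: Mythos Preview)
Your two–sided comparison is exactly the paper's framework: testing $\gamma_x^*|_{[-t,0]}$ in the Lax--Oleinik infimum gives the paper's inequality (5.1), and extending the finite-horizon minimizer $\hat\gamma$ backward by a calibrated (equivalently, unstable-manifold) tail $\eta$ gives (5.2). For the tail integrals the paper uses the neater identity $\int_{-\infty}^{-t}e^{\lb s}L(\eta,\dot\eta)\,ds=e^{-\lb t}u^-(\eta(-t))$ (valid because $\eta$ is $u^-$-calibrated) together with Lipschitzness of $u^-$ via the $C^k$ approximants of Theorem~C, in place of your Taylor expansion of $L$ about $(x_0,0)$; the two routes are equivalent. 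One remark: your extension ``by the orbit in $W^u$ landing at $(y_t,V^u(y_t))$'' presupposes $y_t$ already lies in the graph domain of $W^u$, which is circular; the paper simply extends by the backward calibrated curve from $y_t$, which exists for every $y_t\in M$.

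The genuine gap is your endpoint plan. The paper proves $\|\gamma_x^-(-t)-x_0\|\le K_2e^{-\mu t}$ uniformly for \emph{calibrated} curves and then applies this to both tails; on the (5.2) side the endpoint is $y_t=\hat\gamma(-t)$, and the paper does not supply a separate argument there. You are right to flag this as the crux, but your sketch does not close it. First, the equilibrium of $\Phi^t_{H,\lb}$ in $T^*M$ sits at $(x_0,p_0)$ with $p_0=L_v(x_0,0)$, not at $(x_0,0)$; when $p_0\neq0$ the zero section misses the fixed point and a linearised computation shows that only the unstable component of $(y_t,0)-(x_0,p_0)$ is forced to be $O(e^{-\mu t})$, while the stable component is pinned near a nonzero value, so one cannot conclude $d(y_t,x_0)=O(e^{-\mu t})$ this way. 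Second, the strict-subsolution inequality you quote, combined with your upper bound, yields
\[
\eta_U\int_{\{s:\hat\gamma(s)\notin U\}}e^{\lb s}\,ds\ \le\ \big(u^-(x)-w(x)\big)+O(e^{-\lb t}),
\]
and the residual $u^-(x)-w(x)$ is $O(1)$; hence you only get $\int_{\{\hat\gamma\notin U\}}e^{\lb s}\,ds=O(1)$, which places no constraint near $s=-t$ where the discount weight is already $O(e^{-\lb t})$. What the lower bound really needs is only $u^-(y_t)-\alpha\le Ke^{-\mu t}$ (this is what the paper's calibration identity reduces it to), not the stronger spatial estimate $d(y_t,x_0)\le Ke^{-\mu t}$; reorganising your argument around that weaker target is the natural fix, but as written the plan does not deliver it.
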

\begin{corF}
Assume $\wt\cA$ consists of a unique hyperbolic periodic orbit, then there exists a constant $K>0$ and a constant $\mu>0$ being the minimal positive Lyapunov exponent of the hyperbolic periodic orbit, such that 
\[
\liminf_{t\rightarrow+\infty}\dfrac{\|\cT_t^-\mathbf{0}(x)-u^-(x)+e^{-\lb t}\alpha\|}{e^{-(\mu+\lb)t}}\leq K
\]
with $\alpha=u^-(x_0)$ for any $x_0\in\cA$ fixed.
\end{corF}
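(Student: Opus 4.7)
The plan is to reduce the hyperbolic periodic orbit case to the hyperbolic equilibrium case of Theorem E by working with the time-$T$ return map, where $T>0$ is the period of the unique orbit $\Gamma\subset\wt\cA$. Lifting to $TM$ and regarding $\Gamma$ as a hyperbolic fixed point of $\Phi_{L,\lb}^{T}$, its local stable manifold $W^{s}_{\mathrm{loc}}(\Gamma)$ exhibits contraction at the minimal Lyapunov rate $\mu$ \emph{per period}. Fix the base point $x_{0}\in\cA$ entering the definition of $\alpha$, and parametrise $\Gamma$ by its weak KAM calibration $\gm_{0}:\R\to M$ with $\gm_{0}(0)=x_{0}$, so that $u^{-}(x_{0})=\int_{-\infty}^{0}e^{\lb s}L(\gm_{0},\dot\gm_{0})ds=\alpha$ by the same calibration identity that underpins Theorem E.

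For each $t$, let $\gm^{*}_{t}:[0,t]\to M$ be a minimizer of $\cT_{t}^{-}0$ ending at $x$. A standard weak KAM compactness argument combined with uniqueness of $\wt\cA$ forces $(\gm^{*}_{t}(0),\dot\gm^{*}_{t}(0))$ to lie in $W^{s}_{\mathrm{loc}}(\Gamma)$ once $t$ is large, and pulling back by $\Phi_{L,\lb}^{-s}$ the deviation from $\Gamma$ decays at rate $e^{-\mu s}$ along integer multiples of $T$. Using a smooth constrained subsolution $u\in C^{k}\cap\cS_{c}^{-}$ furnished by Theorem C as a gauge, one decomposes
\[
\cT_{t}^{-}0(x)-\bigl(u^{-}(x)-e^{-\lb t}\alpha\bigr)=\int_{0}^{t}e^{-\lb(t-s)}\bigl[L(\gm^{*}_{t},\dot\gm^{*}_{t})-L(\eta_{s},\dot\eta_{s})\bigr]ds+\text{(boundary terms)},
\]
where $\eta_{s}$ is the reference curve lying on $\Gamma$ and phase-matched to $\gm^{*}_{t}$. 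Because $L$ is $C^{2}$ along $\Gamma$, the bulk integrand is $O(e^{-2\mu s})$; integrated against the discount weight $e^{-\lb(t-s)}$ this contributes $O(e^{-(\mu+\lb)t})$, and combined with the boundary contribution, controlled by $\|u-u^{-}\|_{C^{0}}$ via Theorem C, it yields the required bound.

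The appearance of $\liminf$ rather than the sup-estimate of Theorem E reflects that a hyperbolic periodic orbit attains its sharp contraction $e^{-\mu T}$ only over one complete return: the deviation $|\gm^{*}_{t}(s)-\eta_{s}|$ realises its optimal size $e^{-\mu s}$ only when $t$ lies in a specific congruence class modulo $T$, and oscillates within each period otherwise. The main technical obstacle is therefore the \emph{alignment step}: selecting a sequence $t_{n}\to\infty$ so that the endpoint of $\gm^{*}_{t_{n}}$ hits the Poincaré section at the reference point of $\Gamma$ used to define $\alpha$, while simultaneously controlling the replacement of $u^{-}$ by the smooth $u$ in the boundary term. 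Once this alignment is implemented, the argument of Theorem E applied to the $\Phi_{L,\lb}^{T}$-hyperbolic fixed point $\Gamma$ delivers the claimed $\liminf$ estimate, with constant $K$ determined by the local hyperbolic splitting at $\Gamma$.
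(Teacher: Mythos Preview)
Your plan departs substantially from the paper's argument and, as stated, has real gaps.

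The paper's proof of Corollary F is a two-line adaptation of Theorem E. In Theorem E the only place the equilibrium hypothesis is used is in the distance estimate $\|\gamma_x^-(-t)-z\|\leq K_2 e^{-\mu t}$, which feeds into the bound $e^{-\lb t}\|du_n\|\cdot\|\gamma_x^-(-t)-x_0\|$ obtained from the $C^k$ constrained subsolutions $u_n$. When $\wt\cA$ is a periodic orbit, each backward calibrated curve still converges exponentially to the \emph{orbit}, but it approaches the single marked point $x_0\in\cA$ only along a subsequence of times synchronised with the period. Hence the paper simply replaces the uniform bound by
\[
\liminf_{t\to+\infty}\dfrac{\|\gamma_x^-(-t)-x_0\|}{e^{-\mu t}}\leq K_3,
\]
and the rest of Theorem E's estimate goes through verbatim, producing the $\liminf$ conclusion. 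No Poincar\'e map, no new decomposition, no quadratic Taylor estimate.

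Your proposal, by contrast, rests on an integral identity
\[
\cT_{t}^{-}0(x)-\bigl(u^{-}(x)-e^{-\lb t}\alpha\bigr)=\int_{0}^{t}e^{-\lb(t-s)}\bigl[L(\gm^{*}_{t},\dot\gm^{*}_{t})-L(\eta_{s},\dot\eta_{s})\bigr]ds+\text{(boundary terms)}
\]
that is neither derived nor well-specified: $u^{-}(x)-e^{-\lb t}\alpha$ is not an action along a curve on $\Gamma$ with the same endpoints as $\gm^{*}_{t}$, so the ``boundary terms'' absorb the entire difficulty. The claim that the bulk integrand is $O(e^{-2\mu s})$ ``because $L$ is $C^{2}$'' would require the first-order Taylor term to vanish, i.e.\ that $\eta$ extremises something it does not obviously extremise here; you do not address this. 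Finally, the assertion that $(\gm^{*}_{t}(0),\dot\gm^{*}_{t}(0))\in W^{s}_{\mathrm{loc}}(\Gamma)$ is muddled: the relevant object in Theorem E is the backward calibrated curve, which lies on the \emph{unstable} manifold (Lemma~\ref{lem:ck-gra}), not the forward minimiser of $\cT_t^-0$.

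The fix is to discard the time-$T$ map framework and instead track how the proof of Theorem E uses the pointwise estimate $\|\gamma_x^-(-t)-x_0\|$; once you observe that a backward calibrated curve winding onto a periodic orbit returns near $x_0$ along a sequence of times, the $\liminf$ statement follows immediately.
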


\begin{rmk}
For any $\psi\in C^0(M,\R)$, the convergent rate 
\be\label{eq:c0-con}
\|\cT_t^-\psi(x)-u^-(x)\|\sim O(e^{-\lb t}),\quad t\geq 0
\ee
has been easily proved in a bunch of references, e.g. \cite{DFIZ}. That implies the convergence of the Lax-Oleinik semigroup is independent of $\psi$. However, as $\lb\rightarrow 0_+$, the convergence becomes more and more  ineffective. 

On the other side, for the case $\lb=0$, The papers \cite{IM, M,WY1} have shown the exponential convergence of the Lax-Oleinik semigroup, with the assumption that Aubry set consists of finitely many hyperbolic equilibria. That gives us chance to estimate the convergence speed for $\cT_t^-{\bf 0}$, which is much better than \eqref{eq:c0-con} for $0<\lb\ll1$. So we get the Theorem E and Corollary F. \smallskip

Due to Theorem D of \cite{Man}, for generic $H(x,p)$ we can guarantee the uniqueness of the hyperbolic equilibrium (resp. periodic orbit with fixed homology class) for the associated conservative Hamiltonian equation, so the uniqueness of equilibrium (resp. periodic orbit) is not an artificial condition for \eqref{eq:ode0} with $0<\lb\ll1$. Nonetheless, we can still generalize Theorem E (resp. Corollary F) to several hyperbolic equilibriums (resp. periodic orbits), by replacing the constant $\alpha$ to a piecewise function 
\[
\alpha (x)=\int_{-\infty}^0e^{\lb t}L(z_x,0)dt,\quad x\in M
\]
where $z_x$ is an arbitrary point in the $\alpha-$limit set of the backward calibrated curve $\gamma_x^-:(-\infty,0]\to M$ ending with $x$. Unfortunately , $\alpha(x)$ is posteriorly decided by the equilibria (resp. periodic orbits) and then can't be explicitly expressed.
\end{rmk}

\vspace{20pt}

\subsection{Organization of the article}
The paper is organized as follows: In Sec \ref{s2}, we give a brief review of weak KAM theory for equation \eqref{eq:hj}, then in Sec. \ref{s2+} we give the proof of Theorem A, B and C. In Sec \ref{s3}, we discuss the global attractor and prove Theorem D. In Sec \ref{s4}, we discuss the convergent speed of the Lax-Oleinik semigroup and prove Theorem E and Corollary F. For the consistency of the proof,  some technical conclusions are moved to the Appendix.
\medskip

\noindent{\bf Acknowledgements:}  L.Jin is partly supported by by the National Key R\&D Program of China (2021YFA1001600) and the NSFC (Grant No. 12371186, 12171096). J. Zhang is supported by the National Key R\&D Program of China (No.2022YFA1007500) and the National Natural Science Foundation of China (No. 11901560, 12231010). K. Zhao  is supported by the National Natural Science Foundation of China (No. 12301233, 12171096). All the authors would like to thank the anonymous referee for the careful reading of and useful comments on the original version of this paper, which significantly helps to improve the presentation.
\vspace{20pt}

\section{Weak KAM theory of discounted systems}\label{s2}

First, we review some conclusions about the discounted Hamilton-Jacobi equation \eqref{eq:hj}.
%
%
%
%
%
Let's define the {\sf action function} by
\be\label{eq:act}
h_\lambda^t(x,y):=\inf_{\substack{\gamma\in C^{ac}([0,t],M)\\\gamma(0)=x,\ \gamma(t)=y}}\int_0^t e^{\lb s}L(\gamma(s),\dot\gamma(s))ds,\quad t\geq 0
\ee
of which the infimum $\gamma_{\min}:{[0,t]}\rightarrow M$ is always available and is $C^k-$smooth (by the {\sf Tonelli Theorem} and {\sf Weierstrass Theorem} \cite{MS}). Moreover, $(\gamma_{\min},\dot\gamma_{\min}):[a,b]\rightarrow TM$ is a solution of the {\sf Euler-Lagrange equation}
\beq\label{eq:e-l}
\left\{\tag{E-L}
\begin{aligned}
&\dot x=v,\\
&\frac d{dt}L_v(x,v)+\lb L_v(x,v)=L_x(x,v).
\end{aligned}
\right.
\eeq
For any point $(x,v)\in TM$, we denote by $\Phi_{L,\lb}^t(x,v)$ the {\sf Euler-Lagrange flow}, which satisfies $\Phi_{L,\lb}^t\circ\mathcal{L}_H=\mathcal{L}_H\circ\Phi_{H,\lb}^t$ in the valid time domain of $\Phi_{H,\lb}^t$. \\

 The {\sf backward Lax-Oleinik semigroup} operator $\cT_{t}^-: C^0(M,\R)\rightarrow C^0(M,\R)$ can be expressed by
\be\label{eq:evo}
\cT_{t}^-\psi(x):=e^{-\lambda t } \min_{y\in M } \{ \psi(y)+h^t_\lambda(y,x)  \}
\ee
which works as a viscosity solution of the following {\sf evolutionary equation}:
 \be\label{eq:evo-hj}
 \left\{
 \begin{aligned}
 &\partial_tu(x,t)+H(x,\partial_x u)+\lb u=0,\\
 &u(\cdot,0)=\psi(\cdot),\;\;\; \quad t\geq 0,\,x\in M.
 \end{aligned}
 \right.
 \ee
 As $t\rightarrow+\infty$, $\cT_t^-\psi(x)$ converges to a unique function
\be\label{eq:sta}
u^-(x):=\lim_{t\rightarrow+\infty}\cT_t^-\psi(x)=\inf_{\substack{\gamma\in C^{ac}((-\infty,0],M)\\\gamma(0)=x}}\int_{-\infty}^0e^{\lb\tau}L(\gamma,\dot\gamma)d\tau
\ee
which is exactly the viscosity solution of (\ref{eq:hj}) and proved to satisfies the following in \cite[Proposition 5,7]{MS}:

\begin{prop}\label{lem:ms}
\begin{itemize}
\item $u^-$ is Lipschitz on $M$, with the Lipschitz constant depending only on $L$ (independent of $\lb$).
\item $u^-\prec_\lb L$.
\item For every $x\in M$, there exists a {\sf backward calibrated curve} $\gamma_x^-:(-\infty,0]\rightarrow M$ which achieves the minimum of (\ref{eq:sta}). 
\item For any $t<0$,
\[
u^-(x)=e^{\lb t}u^-(\gamma_x^-(t))+\int_t^0e^{\lb s}L(\gamma_x^-(s),\dot\gamma_x^-(s))ds,
\]
  and there {is} a uniform upper bound $K$ depending only on $L$ such that $|\dot\gamma_x^-|\leq K$.
\end{itemize}
Any continuous function satisfies bullets 2 and 3 is also called a {\sf weak KAM solution} of (\ref{eq:hj}).
\end{prop}

Similarly, we can define the {\sf forward Lax-Oleinik semigroup} operator $\cT_{t}^+: C^0(M,\R)\rightarrow C^0(M,\R)$  by
\be
\cT_t^+\psi(x):=\max_{y\in M} \{e^{\lambda t} \psi(y)-h^t_\lambda(x,y)  \}
\ee
for later use.
\begin{defn}
A continuous function $f:\cU \subset \R^n \rightarrow\R$ is called {\sf semiconcave with linear modulus } if there exists $\cC>0$ such that
\be\label{eq:scl-defn}
f(x+h)+f(x-h)-2f(x)\leq \cC|h|^2
\ee
for all $x\in \cU,\ h\in\R^n$. Here { $\cC$} is called a {\sf semiconcavity constant} of $f$. Similarly we can define the {\sf semiconvex functions with linear modulus} if we change `$\leq$' to `$\geq$' in (\ref{eq:scl-defn}).  For any manifold $M$, we can always endow it by a group of compatible charts, each of which is diffeomorphic to an open set of $\R^n$. Consequently, we can generalize previous definition to $M$ with no additional difficulties. 

\end{defn}
\begin{defn}
Assume $u\in C(M,\R)$, for any $x \in M$, the closed convex set
	$$
	D^- u(x)=\Big\{p\in T^*M : \liminf_{y\rightarrow x} \frac{u(y)-u(x)-\langle p,y-x \rangle }{|y-x|} \geq 0  \Big\}
	$$
	$$
	\Big( \ \text{resp.} \ D^+ u(x)=\Big\{p\in T^*M : \limsup_{y\rightarrow x} \frac{u(y)-u(x)-\langle p,y-x \rangle }{|y-x|} \leq 0  \Big\} \Big)
	$$
	is called the {\sf sub-differential} (resp. {\sf super-differential}) set of $u$ at $x$.
\end{defn}
\begin{lem}\cite[Theorem.3.1.5]{CS}\label{D^+ convex}
$f:\cU\subset\R^d\rightarrow \R$ is semiconcave (resp. semiconvex), then $D^+f(x)$ (resp. $D^-f(x)$) is a nonempty compact convex set  for any $x\in\cU$.
\end{lem}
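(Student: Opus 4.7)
The plan is to reduce the statement to the special case where the function is concave, via a quadratic correction. I would set $g(x) := f(x) - \tfrac{\cC}{2}|x|^2$ on $\cU$. Writing the semiconcavity inequality in its midpoint form yields $g(x+h)+g(x-h)-2g(x)\leq 0$, i.e.\ midpoint concavity of $g$; combined with the continuity inherited from $f$, this upgrades to honest concavity on any convex subdomain of $\cU$. A short computation with the limsup defining $D^+$ then gives the translation identity $D^+ f(x) = D^+ g(x) + \cC x$, so it suffices to prove the three properties for $D^+ g$ at an interior point. The semiconvex/sub-differential case is entirely analogous after setting $g(x) := f(x) + \tfrac{\cC}{2}|x|^2$, which is then convex.

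For concave $g$, convexity and closedness of $D^+ g(x)$ are immediate from the linear dependence of the numerator inside the limsup on $p$. Nonemptiness I would obtain from the supporting hyperplane theorem applied to the hypograph $\{(y,t)\in\R^d\times\R : t\leq g(y)\}$ at its boundary point $(x,g(x))$: this produces a vector $p$ satisfying the global inequality $g(y)\leq g(x)+\langle p, y-x\rangle$ for all $y$ near $x$, which in particular gives the limsup bound required by the definition of $D^+ g(x)$. Boundedness of $D^+ g(x)$ follows from the well-known local Lipschitz property of concave functions on the interior of their domain: every $p\in D^+ g(x)$ has norm bounded by the local Lipschitz constant of $g$ near $x$, so the set is compact.

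The step I expect to require the most care is reconciling the limsup-style definition of $D^+$ used in this paper with the global supergradient $\{p : g(y)\leq g(x)+\langle p,y-x\rangle\ \forall y\}$ from convex analysis. For continuous concave functions the two sets coincide, but this identification relies on monotonicity of chord slopes and must be recorded carefully; it is the real technical content of the lemma. An alternative, slightly cleaner route would be to bypass convex analysis entirely and prove nonemptiness of $D^+ f(x)$ directly from the semiconcavity estimate by writing down a $C^2$ quadratic majorant of the form $y\mapsto f(x)+\langle p,y-x\rangle+\tfrac{\cC}{2}|y-x|^2$ that touches $f$ at $x$, with $p$ produced by the supporting hyperplane argument applied to $g$. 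This is essentially the content of the quadratic-correction trick, and it sidesteps the supporting-hyperplane-versus-limsup equivalence while keeping the argument self-contained.
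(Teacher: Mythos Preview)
The paper does not supply a proof of this lemma at all: it is simply quoted from \cite[Theorem 3.1.5]{CS} and used as a black box. So there is nothing in the paper to compare your argument against.

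That said, your sketch is correct and is essentially the standard argument one finds in the cited reference. The quadratic-correction reduction $g(x)=f(x)-\tfrac{\cC}{2}|x|^2$ to a genuinely concave function, the translation identity $D^+f(x)=D^+g(x)+\cC x$, and the appeal to the supporting-hyperplane theorem for nonemptiness together with local Lipschitz bounds for compactness are exactly how Cannarsa--Sinestrari proceed. Your identification of the one subtle point---that the limsup definition of $D^+g$ must be matched with the global supergradient from convex analysis---is accurate and is indeed where the only genuine work lies; for continuous concave $g$ this equivalence follows from monotonicity of difference quotients along rays. The alternative route you propose (building a $C^2$ quadratic majorant touching $f$ at $x$) is also fine and arguably more direct for the semiconcave setting.
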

%
%
\begin{prop}\label{prop:domi-fun}
\begin{enumerate}
\item $\cT_{t+s}^\pm=\cT_t^\pm\circ\cT_s^\pm$;
\item if $u\leq v$, $\cT_t^\pm u\leq\cT_t^\pm v$;
\item  $u\prec_\lb L $ if and only if $u \leq \mathcal{T}^-_{t}u $, then $\cT_t^-u\prec_\lb L$.
\item  $u\prec_\lb L$ if and only if $\mathcal{T}^+_tu\leq u $, then $\cT_t^+u\prec_\lb L $.
\item For any $\psi\in C^0(M,\R)$, $\cT_t^-\psi$ is semiconcave for $t>0$. Similarly, $\cT_t^+\psi$ is semiconvex for $t>0$.
\end{enumerate}
\end{prop}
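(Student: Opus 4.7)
All five claims are standard weak-KAM bookkeeping built on the action function $h_\lb^t$ and the dominated inequality; I would prove them in the order (1), (2), (3), (4), (5), each using the previous.

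Item (1). The core is the concatenation identity for the action. Splitting a curve $\gm\colon[0,t+s]\to M$ from $z$ to $x$ at the midtime point $y=\gm(s)$ and changing variables $\tau=s+\sigma$ on the second piece yields
\[
h_\lb^{t+s}(z,x)=\min_{y\in M}\Big(h_\lb^s(z,y)+e^{\lb s}h_\lb^t(y,x)\Big).
\]
Substituting this into \eqref{eq:evo} and pulling out the factor $e^{-\lb(t+s)}$ gives $\cT_{t+s}^-=\cT_t^-\circ\cT_s^-$; the forward case is symmetric. Item (2) is then immediate since $\cT_t^\pm$ depend on their data through a monotone inf/sup.

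Item (3). For the ``only if'' direction I would specialize \eqref{eq:dom} to an arbitrary curve on $[0,t]$ from $y$ to $x$ to get $u(x)\le e^{-\lb t}\big(u(y)+\int_0^t e^{\lb s}L\,ds\big)$, and then take the infimum over $(y,\gm)$ to get $u\le \cT_t^-u$. For the converse, given any $\gm\colon[a,b]\to M$ I would reparametrize by $\tilde\gm(\tau)=\gm(\tau+a)$ on $[0,b-a]$ to obtain $h_\lb^{b-a}(\gm(a),\gm(b))\le e^{-\lb a}\int_a^b e^{\lb\tau}L\,d\tau$, combine with $u(\gm(b))\le \cT_{b-a}^-u(\gm(b))$ and multiply by $e^{\lb b}$ to recover \eqref{eq:dom}. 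The closure statement $\cT_t^-u\prec_\lb L$ is then a two-line application of (1) and (2):
\[
\cT_t^-u\le \cT_t^-(\cT_s^-u)=\cT_{t+s}^-u=\cT_s^-(\cT_t^-u),\qquad \forall s\ge 0.
\]
Item (4) is the mirror argument, exchanging min and max and working with curves ending at the maximizer, with the closure reading $\cT_s^+(\cT_t^+u)=\cT_t^+(\cT_s^+u)\le \cT_t^+u$.

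Item (5). Here I would use variational comparison at the level of minimizers. Fix $t>0$, $x\in M$, and small $h\in T_xM$; let $(y,\gm)$ be a minimizer realizing $\cT_t^-\psi(x)$. Construct competitors $\gm^\pm$ from $y$ to $\exp_x(\pm h)$ by adding a $C^1$ bump supported near the terminal time that displaces the endpoint by $\pm h$ while leaving $y$ fixed. Taylor-expanding the action of $\gm^\pm$ in $h$, the first-order variations cancel in the symmetric combination $\cT_t^-\psi(x+h)+\cT_t^-\psi(x-h)-2\cT_t^-\psi(x)$, leaving an $O(|h|^2)$ remainder bounded in terms of the $C^2$-norm of $L$ near the minimizer; cf. \cite[Thm.~3.1.5]{CS}. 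The semiconvexity of $\cT_t^+\psi$ follows from the same construction applied to maximizers with signs reversed.

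The main obstacle lies in (5): to make the quadratic Taylor estimate uniform in $x$, one needs an a priori bound placing the minimizer $\gm$ (and its perturbations $\gm^\pm$) inside a fixed compact subset of $TM$ depending only on $t$. This is where the Tonelli hypotheses (H1)--(H2) and compactness of $M$ enter, via the standard a priori Lipschitz bound on minimizers of \eqref{eq:act} on a compact time interval. The other four items are purely formal consequences of the definitions and the concatenation identity above.
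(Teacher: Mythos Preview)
Your arguments for items (1)--(4) are correct and match the paper's proof essentially line for line: the concatenation identity for $h_\lb^t$, monotonicity of inf/sup, the two-way passage between $u\prec_\lb L$ and $u\le\cT_t^-u$ via the dominated inequality, and the closure via $\cT_s^-(\cT_t^-u)=\cT_{t+s}^-u\ge\cT_t^-u$.

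Item (5) is where you and the paper diverge. The paper does not construct competitors $\gm^\pm$ and Taylor-expand the action; instead it invokes the known fact that $h_\lb^t(y,\cdot)$ is semiconcave with linear modulus (citing \cite[Proposition 6.2.1]{F} or \cite{CS}), notes that $\psi(y)+h_\lb^t(y,\cdot)$ inherits this, and then applies the stability of semiconcavity under pointwise infimum (\cite[Proposition 2.1.5]{CS}) to pass to $\cT_t^-\psi$. Your direct variational route is essentially a proof of the cited fact about $h_\lb^t$, carried out on $\cT_t^-\psi$ rather than modularly; it is valid, and your remark about needing the a priori Lipschitz bound on minimizers is exactly the point where the Tonelli hypotheses enter in either approach. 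The paper's route is shorter because it outsources that analysis, while yours is more self-contained. One small quibble: your reference to \cite[Thm.~3.1.5]{CS} is misplaced---that result concerns the structure of $D^+f$, not the quadratic Taylor remainder you are invoking.
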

\begin{proof}
The idea is borrowed from \cite{Ber}, with necessary adaptions.

(1) For any $t, s>0$, we have
\begin{align*}
	\cT^-_{t+s}\psi (x)=&\, e^{-\lb(t+s)} \min_{y\in M} \{ \psi(y)+ h_\lambda^{t+s}(y,x)\}  \\
	=&\,  e^{-\lb(t+s)} \min_{y\in M} \{ \psi(y)+ \min_{z\in M} \{ h_\lambda^{s}(y,z)+e^{\lambda s} h_\lambda^{t}(z,x) \}\} \\
	=&\,  e^{-\lb t} \min_{y\in M}\min_{z\in M} \{ e^{-\lambda s }\psi(y)+  e^{-\lambda s  } h_\lambda^{s}(y,z)+ h_\lambda^{t}(z,x) \}        \\
	=&\,   e^{-\lb t} \min_{z\in M} \{ e^{-\lambda s } \min_{y\in M}\{\psi(y)+   h_\lambda^{s}(y,z) \}+ h_\lambda^{t}(z,x) \} \\
	=&\,   e^{-\lb t} \min_{z\in M} \{  \cT_{s}^-\psi(z) + h_\lambda^{t}(z,x) \} \\
	=&\, \cT^-_{t} \circ \cT^-_{s}\psi(x)
\end{align*}
It is similar for $\cT^+_{s+t}=\cT^+_t \circ \cT^+_s $.

(2) It is an immediate consequence of the definition of $\cT_t^-$, i.e.
$$
 \cT_t^- u=e^{-\lambda t } \min_{y\in M } \{ u(y)+h^t_\lambda(y,x)  \} \leq e^{-\lambda t } \min_{y\in M } \{ v(y)+h^t_\lambda(y,x)  \}={\cT_t^- v.}
$$
{For $\cT^+_t $ it's similar.}

(3) 
On one hand, if $u \leq \mathcal{T}^-_{t}u   $,  according to the definition of $\mathcal{T}_t^-$, we have that
\begin{align*}
  u(x)\leq \mathcal{T}_t^-u(x)  = \inf_{y\in M} \big\{e^{-\lambda t } u(y)+e^{-\lambda t} h^t_\lambda(y,x)\big\}
\end{align*}
which means that $e^{\lambda t } u(x)-u(y)\leq h^t_\lambda(y,x) $ for any $
x,y \in M$. Therefore, $u \prec_\lb L  $.

On the other hand, if $u \prec_\lb L $, we have that $u(x) \leq e^{-\lambda t } u(y)+e^{-\lambda t} h^t_\lambda(y,x) $ for any $
x, y \in M$ which implies that $u \leq \mathcal{T}^-_{t}u $ by taking the infimum of $y$. In summary, for every $t'\geq 0$, one obtains
$$
	\mathcal{T}^-_{t}u\leq    \mathcal{T}^-_{t}[ \mathcal{T}^-_{t'}u] = \mathcal{T}^-_{t+t'} u =\mathcal{T}^-_{t'} [\mathcal{T}^-_{t}u ]
$$
 which implies that $\mathcal{T}^-_{t}u \prec_\lb L $.

(4) Similar as above,  $\mathcal{T}^+_tu\leq u $ if and only if $ u\prec_\lb L  $. Hence, for every $t'\geq 0$,
$$
	\mathcal{T}^+_{t}u \geq  \mathcal{T}^+_{t}[ \mathcal{T}^+_{t'}u] = \mathcal{T}^+_{t+t'} u =\mathcal{T}^+_{t'} [\mathcal{T}^+_{t}u ]
$$
which implies that $\mathcal{T}^+_{t}u \prec_\lb L $.

(5)
 As is shown in \cite[Proposition 6.2.1]{F}, 
 for any fixed $t>0$ and $y\in M$,  $h^t_\lambda(y,\cdot)$ and $h^t_\lambda(\cdot,y)$ are both semiconcave, then $\psi(y)+h^t_\lambda(y,\cdot) $ is semiconcave and $e^{\lambda t} \psi(y)-h^t_\lambda(\cdot,y) $ is semiconvex. Notice that the semicocave (resp. semiconvex) modulus is not uniform as $t\rightarrow 0_+$, yet for any $t>0$ fixed, the modulus is a constant since $M$ is compact. Then due to  \cite[Proposition 2.1.5]{CS},  $\min_{y\in M } \big(\psi(y)+h^t_\lambda(y,x)\big)$ preserves the semiconcavity, so $\mathcal{T}^-_t \psi(x) $ is also semiconcave. Similar proof implies $\mathcal{T}^+_t \psi(x)$ is semiconvex.
\end{proof}

\vspace{20pt}

\section{Proof of  Theorem A, B and C}\label{s2+}

\subsection{Proof of Theorem A} 
 (1).
For any $\nu\in\mathfrak M_\lb$ and any $u\in\cS^-$,
\be\label{eq:mes-cal}
\int_{TM} \lb ud\nu\leq\int_{TM} \lb u^- d\nu&=&\lb \int_{TM}\int_{-\infty}^0 e^{\lb s} L(\Phi_{L,\lb}^s(x,v)) ds d\nu\nonumber\\
&=&\lb \int_{-\infty}^0 e^{\lb s}\Big(\int_{TM}L(\Phi_{L,\lb}^s(x,v))d\nu\Big) ds\\
&=&\lb \int_{-\infty}^0 e^{\lb s}\Big(\int_{TM}L(x,v)d\nu\Big) ds\nonumber\\
&=&\lb  \int_{-\infty}^0 e^{\lb s} ds\cdot \int_{TM}L(x,v)d\nu=\int_{TM}L(x,v)d\nu,\nonumber
\ee
which implies
\[
\int_{TM}L(x,v)-\lb u \ d\nu\geq 0,\quad\forall \nu\in\mathfrak M_\lb.
\]
Moreover, for any ergodic measure $\mu\in\mathfrak M_\lb$ supported in $\wt\cA$, for every $(x,v)\in supp\ \mu$, 
\ben
\int_{TM} \lb u^- d\mu&=&\lb \int_{TM}\int_{-\infty}^0 e^{\lb s} L(\Phi_{L,\lb}^s(x,v)) \ ds  d\mu\\
&=&\lb \int_{-\infty}^0 e^{\lb s} \int_{TM}L(\Phi_{L,\lb}^s(x,v)) \ d\mu ds\\
&=&\lb \int_{-\infty}^0 e^{\lb s} \int_{TM}L(x,v) \ d\mu  ds\\
&=&\lb  \int_{-\infty}^0 e^{\lb s} ds\cdot \int_{TM}L(x,v)d\mu=\int_{TM}L(x,v)d\mu.
\een
So $u^-\in\cS^-_c$.\\

(2). For any $u\in\cS_c^-$, if there is a $\mu\in\mathfrak M_\lb$ such that each step in (\ref{eq:mes-cal}) becomes an equality, then for $\mu-$a.e. $(x,v)\in TM$, we have $u=u^-$.
Therefore, for a.e. $(x,v)\in \text{supp}(\mu)$,
\ben
u(x)=u^-(x)=& \inf_{\gamma(0)=x}\int_{-\infty}^0 e^{\lb s} L(\gamma,\dot\gamma) ds\\
=& \int_{-\infty}^0 e^{\lb s} L(\Phi_{L,\lb}^s(x,v)) ds
\een
thus $\pi\Phi_{L,\lb}^t(x,v)$ is a $u^--$calibrated curve for $t\in(-\infty,0]$. Since $\mu$ is invariant, $\pi\Phi_{L,\lb}^t(x,v)$ has to be globally calibrated, i.e. $\Phi_{L,\lb}^t(x,v)\in\wt\cA$. So  $\cA(u):=$supp$(\mu)\subset\cA$.
\qed

\medskip

\subsection{Proof of Theorem B}  Due to Proposition \ref{prop:domi-fun}, as long as $t>0$ and $s>0$ sufficiently small, $\cT_s^-\cT_t^+u^-(x)$ is a subsolution of (\ref{eq:hj}). We claim that $u^-(x)= \cT_t^+u^-(x)=\cT_t^-u^-(x)=\cT_s^-\cT_t^+u^-(x) $ for any $x\in\cA $ and for any $s,t\in \R^+$. This is because  $u^-\in S_c^-$, so $u^- \prec_\lb L $ which implies that $ \cT_t^- u^- \geq u^- $ and $ \cT_t^+ u^- \leq u^- $ due to  Proposition \ref{prop:domi-fun}. For  $x\in\cA $, by the  Definition \ref{def:Aubry}, there is exist a  curve $\gamma_x^-:[0,t]\to M$ such that 
\[
e^{\lb t}u^-(\gm_x^-(t))- u^-(\gm_x^-(0))=\int_0^t e^{\lb s}L(\gamma_x^-(s),\dot\gamma_x^-(s)) \ ds
\]
 with $u^-(\gamma_x^-(0))=x$. Hence, \begin{align*}
	u^-(x) \leq \cT_t^+ u^-(x)=&\, \sup_{\substack{\gamma\in C^{ac}([0,t],M)\\\gamma(0)=x}}e^{\lb t}u^-(\gamma(t))-\int_0^te^{\lb \tau}L(\gamma(\tau),\dot\gamma(\tau))d\tau. \\
	\leq &\,  e^{\lb t}u^-(\gamma_x^-(t))+\int_0^te^{\lb \tau }L(\gamma_x^-(\tau),\dot\gamma_x^-(\tau))d\tau \\
	=&\, u^-(\gamma_x^-(0))=u^-(x) .
\end{align*}
which implies that $u^-(x)=\cT_t^+u^-(x) $ for any $x\in\cA $. Similarly we can prove $u^-(x)=\cT_t^-u^-(x) $ for any $x\in\cA$. So $\cT_s^-\cT_t^+u^-$ is indeed a solution on $\cA $. Recall that $\cT_t^+u^-(x)$ is always semiconvex, and for sufficiently small $s>0$, it is proven in following Lemma \ref{lem:convex} that $\cT_s^-\psi$ keeps the semiconvexity for any semiconvex function $\psi(x)$, then $\cT_s^-\cT_t^+u^-(x)$ is both semiconcave and semiconvex, {thus has} to be {$C^{1,1}$}. As $s$ and $t$ can be chosen arbitrarily close to zero, so we get the  desired conclusion.\qed

\begin{lem}\label{lem:convex}
Assume $H\in C^k(T^{\ast}M,\R)$ is a Tonelli Hamiltonian, for each semiconvex function $\psi:M\rightarrow\R$ with a linear modulus, there is $t_{0}>0$ such that $\cT_t^-\psi$ is semiconvex for $t\in[0,t_{0}]$.
\end{lem}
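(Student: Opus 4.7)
The plan is to prove semiconvexity of $\cT_t^-\psi$ for small $t$ by a standard second-difference estimate: given a comparison triple $(x+h,x,x-h)$, I will construct a competitor curve for $\cT_t^-\psi(x)$ by taking the chart-wise midpoint of the optimal curves for $x\pm h$, and then combine the semiconvexity of $\psi$ with the $C^2$-regularity of $L$ to bound the second difference of $\cT_t^-\psi$ from below by $-O(|h|^2)$. The argument is local on $M$, so I fix $t_0$ small and work pointwise in a single coordinate chart. By the Tonelli assumptions, for every $t\in(0,t_0]$ and every $x\in M$, any $C^k$-minimizer $\gamma:[0,t]\to M$ attaining $\cT_t^-\psi(x)=e^{-\lb t}\min_y[\psi(y)+h^t_\lb(y,x)]$ satisfies $\sup_s|\dot\gamma(s)|\leq K$ for a uniform $K>0$, hence $d(\gamma(0),x)\leq Kt_0$; choosing $t_0$ smaller than the injectivity radius, all relevant curves fit into one chart about $x$. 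Write $C_\psi$ for a uniform semiconvexity constant of $\psi$ and $C_L$ for a uniform bound on $\|D^2L\|$ on $\{|v|\leq K+1\}$.

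Given $x\in M$ and $h$ small in the chart, let $\gamma_\pm:[0,t]\to M$ be minimizers for $\cT_t^-\psi(x\pm h)$ with initial points $y_\pm:=\gamma_\pm(0)$ and $\gamma_\pm(t)=x\pm h$, and set $\gamma_0(s):=\tfrac12(\gamma_+(s)+\gamma_-(s))$ in the chart, so that $\gamma_0(t)=x$ and $\gamma_0$ is admissible for $\cT_t^-\psi(x)$. Bounding $\cT_t^-\psi(x)$ from above via $\gamma_0$ and subtracting twice from the exact expressions for $\cT_t^-\psi(x\pm h)$, and invoking the two second-difference inequalities
\[
\psi(y_+)+\psi(y_-)-2\psi\!\left(\tfrac{y_++y_-}{2}\right)\geq -\tfrac{C_\psi}{4}|y_+-y_-|^2
\]
(semiconvexity of $\psi$) and
\[
L(\gamma_+,\dot\gamma_+)+L(\gamma_-,\dot\gamma_-)-2L(\gamma_0,\dot\gamma_0)\geq -\tfrac{C_L}{4}\bigl(|\gamma_+-\gamma_-|^2+|\dot\gamma_+-\dot\gamma_-|^2\bigr)
\]
(Taylor expansion of $L\in C^2$) yields the key estimate
\[
\cT_t^-\psi(x+h)+\cT_t^-\psi(x-h)-2\cT_t^-\psi(x)\geq -e^{-\lb t}\Big\{\tfrac{C_\psi}{4}|y_+-y_-|^2+\tfrac{C_L}{4}\int_0^t e^{\lb s}\bigl(|\gamma_+-\gamma_-|^2+|\dot\gamma_+-\dot\gamma_-|^2\bigr)ds\Big\}.
\]

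To close the argument I need to bound each right-hand quantity by a constant times $|h|^2$, which reduces to a short-time stability statement for the variational problem: the initial point $y^*(x)$ and the whole minimizing trajectory depend Lipschitzly on the endpoint $x$, uniformly in $t\in(0,t_0]$. The mechanism is that for such $t$ the action $h^t_\lb(y,x)$ is $C^2$ in $(y,x)$ and is strongly convex in $y$ with modulus of order $1/t$, which eventually dominates $C_\psi$; hence $F(y,x):=\psi(y)+h^t_\lb(y,x)$ is uniformly convex in $y$, so the minimizer $y^*(x)$ is unique and Lipschitz in $x$ via an implicit-function / subdifferential argument, and a Gronwall estimate on the (E-L) equation transfers this to $|\gamma_+(s)-\gamma_-(s)|\lesssim|h|$ uniformly in $s$ and $\int_0^t|\dot\gamma_+-\dot\gamma_-|^2\,ds\lesssim |h|^2/t$. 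Substituting into the previous display produces the semiconvex bound $\cT_t^-\psi(x+h)+\cT_t^-\psi(x-h)-2\cT_t^-\psi(x)\geq -\tilde C_t|h|^2$ for each $t\in(0,t_0]$, with $\tilde C_t$ allowed to blow up as $t\to 0$.

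I expect the main obstacle to be exactly this stability step: extracting Lipschitz dependence of the variational minimizer on the endpoint when $\psi$ is only semiconvex and not $C^1$, so that classical transversality conditions are not directly available. The crucial input is that short time forces the action's $y$-convexity to dominate any anti-convexity of $\psi$, which is precisely why the lemma is stated for $t\in[0,t_0]$ rather than for all $t>0$.
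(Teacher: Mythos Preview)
Your midpoint-competitor approach is valid and the stability step can be completed as you outline: for $t$ small the $y$-Hessian of $h_\lambda^t(y,x)$ is of order $1/t$ and dominates the semiconvexity constant of $\psi$, so $y\mapsto\psi(y)+h_\lambda^t(y,x)$ is strongly convex and its minimizer depends Lipschitzly on $x$; the rest follows by smooth dependence of the two-point boundary value problem for (E-L). The paper, however, takes a different and cleaner route, following Bernard. It represents the semiconvex $\psi$ as $\psi=\max_{\varphi\in\Psi}\varphi$ for a $C^2$-bounded family $\Psi\subset C^2(M,\R)$ with the touching property that for every $x$ and $p\in D^-\psi(x)$ some $\varphi\in\Psi$ satisfies $(\varphi(x),d\varphi(x))=(\psi(x),p)$. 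Monotonicity of $\cT_t^-$ gives $\cT_t^-\psi\geq\sup_{\varphi\in\Psi}\cT_t^-\varphi$; for the reverse inequality, given $x$ one takes an optimal curve for $\cT_t^-\psi(x)$, notes that its initial costate $p(0)$ lies in $D^-\psi(y)$ at the optimal initial point $y$, and chooses $\varphi\in\Psi$ touching $\psi$ at $(y,p(0))$, so that $\cT_t^-\varphi(x)=\cT_t^-\psi(x)$. Hence $\cT_t^-\psi=\sup_{\varphi\in\Psi}\cT_t^-\varphi$, and for short time the method of characteristics keeps $\cT_t^-(\Psi)$ bounded in $C^2$, whence the supremum is semiconvex with a \emph{uniform} constant on $[0,t_0]$. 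The paper's argument trades your quantitative stability estimates for the $C^2$-envelope representation of semiconvex functions and short-time smoothness of $\cT_t^-$ on smooth data; it is slicker and yields a uniform modulus, while your approach is more self-contained in its ingredients and in principle produces traceable constants (though as written your $\tilde C_t$ may blow up as $t\to0$, which is harmless for the statement but could be sharpened).
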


\begin{proof}
We follow the proof of \cite[Lemma 4]{Ber} to prove that, there exists $t_{0}>0$ such that, for $t\in[0,t_0], \cT_t^-\psi$ is supreme of a family of $C^{2}$ functions with a uniform $C^{2}$-bound. Then semiconvexity of $\cT_t^-\psi$ is a direct corollary of that.

\medskip

Since $\psi$ is semiconvex with a linear modulus, by \cite[Proposition 10]{Ber} or \cite[Theorem 3.4.2]{CS}, there exists a bounded subset $\Psi\subset C^{2}(M,\R)$ such that
\begin{enumerate}
  \item $\psi=\max_{\varphi\in\Psi}\varphi$,
  \item for each $x\in M$ and $p\in D^{-}\psi(x)$, there exists a function $\varphi\in\Psi$ satisfying $(\varphi(x),d\varphi(x))=(\psi(x), p)$.
\end{enumerate}
By the definition of $\cT^{-}_{t}$ and (1), we have
\begin{equation}\label{eq:sup}
\cT^{-}_{t}\psi\geq\sup_{\varphi\in\Psi}\cT^{-}_{t}\varphi.
\end{equation}

On the other hand, for the family $\Psi$, there exists $t_{0}>0$ such that, for each $t\in[0,t_0]$, the image $\cT^-_t(\Psi)$ is also a bounded subset of $C^2(M,\R)$ and for all $\varphi\in\Psi$ and $x\in M$,
\[
\cT^-_t\varphi(\gm(t))=e^{-\lb t}\varphi(x)+\int^{t}_0 e^{\lb(\tau-t)}L(\gm(\tau),\dot{\gm}(\tau))d\tau
\]
where $\gm(\tau)=\pi\circ\Phi^\tau_{H,\lb}(x,d\varphi(x))$.

Let $(\gm(\tau),p(\tau)):[0,t]\rightarrow T^{\ast}M$ be a trajectory of \eqref{eq:ode0} which is optimal for $h_\lambda^t(y,x)$, i.e., $\gm(0)=y, \gm(t)=x$ and
\[
h_\lambda^t(y,x)=\int^t_0 e^{\lb\tau}L(\gm(\tau),\dot{\gm}(\tau))d\tau.
\]
It is not difficult to see that $p(0)$ is a super-differential of the function $z\mapsto h_\lambda^t(z,x)$ at $y$. Since the function $z\mapsto e^{-\lb t}[\psi(z)+h_\lambda^t(z,x)]$ is minimal at $y$, then $p(0)\in D^{-}\psi(y)$.

\medskip

We consider a function $\varphi\in\Psi$ such that $(\varphi(y), d\varphi(y))=(\psi(y),p(0))$, then we have $(\gm(t),p(t))=\Phi_{H,\lb}^t(y,d\varphi(y))$ and
\[
\cT^-_t\varphi(x)=e^{-\lb t}\varphi(y)+\int^t_0 e^{\lb(\tau-t)}L(\gm(\tau),\dot{\gm}(\tau))d\tau=e^{-\lb t}[\psi(y)+h^t_\lambda(y,x)]=\cT^-_t\psi(x).
\]
Thus for each $x\in M$, there exists a function $\varphi\in\Psi$ such that $\cT^-_t\varphi(x)=\cT^-_t\psi(x)$, therefore
\[
\cT^{-}_{t}\psi \leq \sup_{\varphi\in\Psi}\cT^{-}_{t}\varphi.
\]
So we complete the proof.
\end{proof}

\begin{rmk}
Actually, obtaining $C^{1,1}-$functions via the forward {and} backward Lax-Oleinik semigroups  is a typical application of the Lasry-Lions regularization method. For the discounted Hamilton-Jacobi equation, the readers can refer to \cite{CCZ} for other applications of this method. Besides, in \cite{WWY3} an implicit Lax-Oleinik semigroup is the crucial weapon in exploring the global dynamics of $1^{st}$ order PDEs.
\end{rmk}

\vspace{10pt}

\subsection{ Proof of Theorem C}

To prove Theorem C, Lemma \ref{lem:ck-gra} is needed. Once it holds, then
%
$u^-$ has to be a $C^k-$graph in a small neighborhood $\cU$ of $\cA(H)$ and $(x, du^-(x))\in W^u(\wt\cA(H))$ for all $x\in\cU$.
Notice that there exists a nonnegative, $C^\infty-$smooth function $V:M\rightarrow\R$ which is zero on $\cA(H)$ and keeps positive outside $\cA(H)$. Moreover,  $\|V\|_{C^k}$ can be taken sufficiently small, so for the new Hamiltonian
\[
\wt H(x,p):=H(x,p)+V(x),
\]
 the hyperbolicity of $\wt \cA(\wt H)$ persists and  $\wt \cA(\wt H)=\wt \cA(H)$ due to the upper semicontinuity of the Aubry set (see Proposition \ref{lem:u-semi}). So if we denote by $\wt u^-$ the viscosity solution of $\wt H$,  $\wt u^-$ is also $C^k$ on $\cU$. We can easily see that $\wt u^-$ is a strict subsolution of $H$ in $\cU \backslash \cA( H) $. Outside $\cU$ we can convolute $\wt u^-$ with a $C^\infty$ function, and keeps $\wt u^-$ invariant on $\cU$. Without loss of generality, let us denote by $\wh u^-$ the modified function,
then for any $x\notin \cU$ being a differentiable point of $\wt u^-$, we have
\be
\lb \wh u^-(x)+ H(x,d \wh u^-(x))&=&\lb \wh u^-(x)+\wt H(x,d \wh u^-(x))-V(x)\nonumber\\
&\leq& \lb \wt u^-(x)+\wt H(x,d \wt u^-(x))-V(x)+\lb |\wh u^-(x)-\wt u^-(x)|\nonumber\\
& &+\max_{\theta\in[0,1]}\Big|H_p\Big(x,\theta d\wt u^-(x)+(1-\theta)d\wh u^-(x)\Big)\Big|\cdot\big|d \wt u^-(x)-d\wh u^-(x)\big|\nonumber\\
&\leq&-V(x)+ C\cdot\big[|\wh u^-(x)-\wt u^-(x)|+|d\wh u^-(x)-d\wt u^-(x)|\big]\nonumber\\
&\leq&-V(x)/2<0, \nonumber
\ee
since $|\wh u^-(x)-\wt u^-(x)|$ and $|d\wh u^-(x)-d\wt u^-(x)|$ can be made sufficiently small. Recall that $\wh u^-\big|_\cU=\wt u^-\big|_{\cU}$, so $\wh u^-$ is a $C^k$ smooth {constrained} subsolution of \eqref{eq:hj} which is a solution on $\cA(H)$ and {strict subsolution outside}.\qed

\begin{lem}[$C^k$ graph]\label{lem:ck-gra}
Assume $\wt\cA$ consists of finitely many hyperbolic equilibrium or periodic orbits, then there exists a neighborhood $V\supset\cA$, such that for all $x\in V$, $(x,d u^-(x))$ lies exactly on the local unstable manifold $W^u_{loc}(\wt\cA)$ ($W^u_{loc}(\wt\cA)$ is actually $C^{k-1}-$graphic over $M$).
\end{lem}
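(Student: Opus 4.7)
The plan is to combine the Hirsch-Pugh-Shub invariant manifold theorem with the characterization of $u^-$ by backward calibrated curves from Proposition~\ref{lem:ms}. By assumption, $\wt\cA$ is a finite union of hyperbolic equilibria and periodic orbits of the Euler-Lagrange flow $\Phi^t_{L,\lb}$, so classical hyperbolic theory produces a $C^{k-1}$ local unstable manifold $W^u_{loc}(\wt\cA)$, which I transport to $T^\ast M$ via the Legendre diffeomorphism $\mathcal{L}_H$ and regard as invariant under $\Phi^t_{H,\lb}$.

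The next step is to show that $W^u_{loc}(\wt\cA)$ is transverse to the vertical fibres of $T^\ast M\to M$ near $\wt\cA$. At a hyperbolic equilibrium $(x_0,p_0)$ one has $H_p(x_0,p_0)=0$, and the linearization of (\ref{eq:ode0}) takes the block form
\[
\left(\begin{matrix} H_{pp} & H_{px}\\ -H_{xx} & -H_{xp}-\lb I\end{matrix}\right);
\]
positivity of $H_{pp}$ from (H1) rules out purely vertical unstable eigenvectors, and a Floquet-theoretic analogue handles the periodic case. After shrinking, $W^u_{loc}(\wt\cA)$ is therefore the $C^{k-1}$ graph of a closed $1$-form $\omega$ over an open neighborhood $V\supset\cA$.

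For the identification $du^-=\omega$ on $V$, pick any $x\in V$ and apply Proposition~\ref{lem:ms} to obtain a backward calibrated curve $\gm_x^-:(-\infty,0]\to M$ with $\gm_x^-(0)=x$, along which $u^-$ is differentiable for every $t<0$ and whose cotangent lift $(\gm_x^-(t),du^-(\gm_x^-(t)))$ is an orbit of $\Phi^t_{H,\lb}$. The $\alpha$-limit set of this orbit carries a $\Phi^t_{L,\lb}$-invariant probability measure which, by Theorem~A, must be supported inside $\wt\cA$; hence the whole backward orbit accumulates on $\wt\cA$ and lies in $W^u(\wt\cA)$, and after shrinking $V$ it lies in $W^u_{loc}(\wt\cA)$. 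Combined with graphicality this yields $du^-(\gm_x^-(t))=\omega(\gm_x^-(t))$ for every $t<0$. Since $u^-$ is Lipschitz on $V$ and such calibrated points are dense in $V$, $u^-$ agrees there with a primitive of the closed $1$-form $\omega$; therefore $u^-$ is $C^{k-1}$ on $V$ and $(x,du^-(x))=(x,\omega(x))\in W^u_{loc}(\wt\cA)$ for every $x\in V$.

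The main obstacle is the graphicality step: because $\Phi^t_{H,\lb}$ is only conformally symplectic and not Hamiltonian, one cannot invoke the Lagrangian argument from Fathi's weak KAM theory to secure transversality of $W^u$ to the vertical direction. I expect the key technical input to be a spectral computation exploiting $\det H_{pp}\neq 0$ at each equilibrium, together with a cocycle/Floquet argument along each periodic orbit to transport the transversality around the orbit. Once graphicality is in hand, convergence of calibrated orbits to $\wt\cA$ and the primitive-extension argument are routine.
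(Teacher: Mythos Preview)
Your argument has a real gap at ``after shrinking $V$ it lies in $W^u_{loc}(\wt\cA)$.'' Knowing that the backward calibrated orbit through $x\in V$ converges to $\wt\cA$ as $t\to-\infty$ only places it on the \emph{global} unstable manifold $W^u(\wt\cA)$; membership in $W^u_{loc}(\wt\cA)$ requires the orbit to remain inside a fixed small neighborhood of $\wt\cA$ for \emph{all} $t\le 0$. Shrinking $V$ controls only the endpoint $x=\gamma_x^-(0)$ and does nothing to prevent a large excursion at intermediate times; if such an excursion occurs, $(x,du^-(x))$ may sit on a sheet of $W^u(\wt\cA)$ different from the graphical one, and your identity $du^-(\gamma_x^-(t))=\omega(\gamma_x^-(t))$ is unavailable on the portion of the orbit outside the domain of $\omega$, so the density argument does not close. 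The paper's proof is devoted almost entirely to this point: by a compactness/contradiction argument on sequences of calibrated curves it shows that for every neighborhood $V\supset\cA$ there is a smaller $U\supset\cA$ such that every backward calibrated curve issued from $U$ stays in $V$ for all negative time. This trapping claim is the missing ingredient in your plan, and the paper then combines it with the graph property of $W^u_{loc}$ to conclude.

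Two smaller remarks. You identify graphicality of $W^u_{loc}$ as the main obstacle, but the paper simply cites Proposition~B of \cite{CI} for it and spends its effort on the trapping claim instead; moreover your spectral sketch is incomplete, since ruling out purely vertical \emph{eigenvectors} does not by itself exclude a nontrivial vertical vector in the unstable subspace (which is a sum of generalized eigenspaces). Finally, calling $\omega$ a closed $1$-form is unjustified: the flow is only conformally symplectic, so $W^u_{loc}$ is not a priori Lagrangian. The extension from a dense set should instead go through semiconcavity of $u^-$: once $du^-=\omega$ at every differentiability point and $\omega$ is continuous, one gets $D^+u^-(x)=\{\omega(x)\}$ for all $x\in V$, hence $u^-\in C^1(V)$ with $du^-=\omega$.
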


\begin{proof}
We claim that:\smallskip

{\tt For any neighborhood $V$ of $\cA$, there always exists an open neighborhood $U\subset V$ containing $\cA$, such that for any $x\in U$, the associated backward calibrated curve $\gamma_x^-:(-\infty,0]\rightarrow M$ would lie in $V$ for all $t\in(-\infty,0]$.} \smallskip

Otherwise, there exists a $V_*$ neighborhood of $\cA$ and a sequence $\{x_n\in V_*\}_{n\in\N}$ converging to some point $z\in\cA$, such that the associated backward calibrated curve $\gamma_n^-$ ending with $x_n$ can go outside $V_*$ for all $n\in\N$. Namely, we can find a sequence $\{T_n\geq 0\}_{n\in\N}$ such that $\gamma_n^-(-T_n)\in \partial V_*$. Due to item 2 of Proposition \ref{lem:ms}, any accumulating curve $\gamma_\infty^-$ of the sequence $\{\gamma_n^-\}_{n\in\N}$ is also a calibrated curve in 2 cases: 

Case 1: the accumulating value $T_\infty$ of associated $\gamma_\infty^-$ is finite, which implies $\gamma_\infty^-:[- T_\infty,0]\rightarrow M$ connecting $z$ and $\partial V_*$. Since $z\in\cA$ and $\wt\cA$ is $\Phi_{L,\lb}^t-$invariant, then $\gamma_\infty^-:\R\rightarrow M$ is contained in $\cA$ as well. That's a contradiction.

Case 2: the accumulating value $T_\infty$ of associated $\gamma_\infty^-$ is infinite, then $\eta_n^-(t):=\gamma_n^-(t-T_n):(-\infty,T_n]\rightarrow M$ accumulates to a $\eta_\infty^-:\R\rightarrow M$ which is globally calibrated by $u^-$. Due to the definition of $\wt\cA$, $\eta_\infty^-$ has to be contained in $\wt\cA$. That's a contradiction.

After all, the claim holds. Since $W^u_{loc}(\wt\cA)$ has to be $C^{k-1}-$graphic in a suitable neighborhood of $\wt\cA$ (Proposition B of \cite{CI}), then our claim actually indicates there exists a suitable $V\supset \cA$, such that for all $x\in V$, the backward calibrated curve $\gamma_x^-:(-\infty,0]\rightarrow M$ is unique and $(x, du^-(x))\in W^u_{loc}(\wt\cA)$.
\end{proof}
\begin{rmk}
	In \cite{Ber2}, a similar conclusion as Lemma \ref{lem:ck-gra} was discussed for conservative Hamiltonian systems. Here we use the same idea, but have to make a much more complicated analysis  to overcome the discount.
\end{rmk}

\vspace{20pt}

\section{Global attractors and the proof of Theorem D}\label{s3}

Now we use $\cS_c^-\cap C^1(M,\R)$ to identify the {\sf maximal global attractor}. Due to Theorem B, we can find a sequence in $\cS_c^-\cap C^1(M,\R)$ converging to $u^-$ w.r.t. the $C^0-$norm.\\


\noindent{\it Proof of Theorem D:}
Due to Theorem \ref{thm:0}, for any $u\in \cS_c^-\cap C^1(M,\R)$, we have $u\leq u^-$. Therefore,
\[
\{(x, p)\in T^*M| \lb u(x)+H(x,p)\leq 0\}\supset \{(x, p)\in T^*M| \lb u^-(x)+H(x,p)\leq 0\}.
\]
which accordingly indicates
\[
\Sigma_c^-=\{(x, p)\in T^*M| \lb u^-(x)+H(x,p)\leq 0\}
\]
On the other side, let us denote
 \[
 F_u(x,p):=\lb u(x)+H(x,p),
 \]
then we can prove that
\ben
\frac d{dt}\Big|_{t=0} F_u(\Phi_{H,\lb}^t(x,p))&=&\lb \langle du(x),\dot x  \rangle   + H_x(x,p) \dot x  +H_p(x,p)\dot p  \\
&=& \lb  \langle du(x),\dot x  \rangle+ H_x(x,p)  H_p(x,p)+H_p(x,p) (-H_x(x,p)-\lb p) \\
&=& \lb  \langle du(x),\dot x)   \rangle -\lambda \langle \dot x,p \rangle   \\
&\leq& \lb (H(x,du(x))+L(x,\dot x) ) -\lb \langle \dot x,p \rangle \\
&=&  \lb [ H(x,du(x))+ \langle \dot x,p \rangle-H(x,p) ] - \lambda  \langle \dot x,p \rangle      \\
&=&  -\lb [ H(x,p) - H(x,du(x)) ] \\ 
&\leq& -\lb [\lb u(x )+ H(x,p)] \\
&=& -\lb F_u(x,p)
\een
where the second equality is according to equation \eqref{eq:ode0} and the first inequality is due to Fenchel transform.
It implies that
every trajectory of (\ref{eq:ode0}) tends to $\Sigma_c^-(u)$ as $t\rightarrow+\infty$, with
\[
\Sigma_c^-(u):=\{(x, p)\in T^*M| F_u(x,p) \leq 0\}.
\]
So 
\[
\bigcap_{u\in\cS^-_c\cap C^1(M,\R)}\Sigma_c^-(u)=\Sigma_c^-
\]
is a global attracting set. Define
\[
\cK=\bigcap_{t\geq 0}\Big(\bigcap_{u\in\cS_c^-\cap C^1(M,\R)}\Phi_{H,\lb}^t(\Sigma_c^-(u))\Big),
\]
we can easily see that $\cK$ contains all the $\om-$limit sets in the phase space, then due to the definition of $\cK$, it has to be maximal.\qed
\vspace{20pt}

\section{Exponential convergence of the Lax-Oleinik semigroup}\label{s4}

\noindent{\it Proof of Theorem E:} Recall that 
\[
\cT_t^- \mathbf{0}(x)=\inf_{\substack{\gamma:[-t,0]\rightarrow M\\\gamma(0)=x}}\int_{-t}^0e^{\lb s}L(\gamma,\dot\gamma)ds,
\]
then we have 
\be\label{eq:leq}
u^-(x)-\cT_t^-\mathbf{0}(x)\geq \int_{-\infty}^{-t} e^{\lb s}L(\gamma_x^-,\dot\gamma_x^-)ds
\ee
where $\gamma_x^-:(-\infty,0]\rightarrow M$ is the backward calibrated curve by $u^-$ and ending with $x$. On the other side, suppose $\wh\gamma:[-t,0]\rightarrow M$ is the infimum achieving $\cT_t^-\mathbf{0}(x)$, then 
\be\label{eq:geq}
u^-(x)-\cT_t^-\mathbf{0}(x)&\leq& \int_{-\infty}^{-t}e^{\lb s}L(\eta,\dot\eta)ds+\int_{-t}^0 e^{\lb s}L(\wh\gamma,\dot{\wh\gamma})ds-\cT_t^-\mathbf{0}(x)\nonumber\\
&=&\int_{-\infty}^{-t}e^{\lb s}L(\eta,\dot\eta)ds
\ee
where $\eta:(-\infty,-t]\rightarrow M$ is the backward calibrated curve by $u^-$ and ending with $\wh\gamma(-t)$. Recall that $\wt\cA$ consists of a unique hyperbolic equilibrium, without loss of generality, we assume $\cA=\{z=(x_0,0)\}$. Combining (\ref{eq:leq}) and (\ref{eq:geq}) we get 
\ben
& &|u^-(x)-\cT_t^-\mathbf{0}(x)-e^{-\lb t}\alpha|\\
&\leq& \max\bigg\{\bigg|\int_{-\infty}^{-t}e^{\lb s}L(\eta,\dot\eta)ds-e^{-\lb t}\alpha\bigg|, \bigg|\int_{-\infty}^{-t} e^{\lb s}L(\gamma_x^-,\dot\gamma_x^-)ds-e^{-\lb t}\alpha\bigg|\bigg\}
\een
with 
\[
\alpha:=\int_{-\infty}^0e^{\lb t}L(x_0,0)dt=u^-(x_0).
\]
On the other side, we can claim the following conclusion:\smallskip

{\tt Claim: Suppose $\wt\cA$ consists of finitely many hyperbolic equilibria or periodic orbits. For any neighborhood $V\supset \cA$, there exists a uniform time $T_V>0$, such that for any $x\in M$, the associated backward calibrated curve $\gamma_x^-:(-\infty,0]$
$\rightarrow M$ will  stay inside $V$ for  $t\in(-\infty,-T_V)$.}\smallskip

Otherwise, there must be a neighborhood $V_*\supset\cA$ and a sequence $\{x_n\}_{n\in\N}\subset M$, such that the associated backward calibrated curve $\gamma_n^-$ would stay outside of $V_*$ at a time $-T_n$, and $T_n\rightarrow +\infty$ as $n\rightarrow +\infty$. With almost the same analysis as in Lemma \ref{lem:ck-gra}, we can show that any accumulating curve of $\{\gamma_n^-(\cdot+T_n):t\in\R\rightarrow M\}$ would be a globally calibrated curve, which implies $V_*^c\cap\wt\cA\neq\emptyset$. This contradiction leads to the claim.\smallskip

Now we choose a suitably small neighborhood $\wt\cU\supset\wt\cA=\{z\}$, such that both the Hartman Theorem is available in $\wt\cU$ and $W^u(\wt\cA)\cap \wt\cU$ is $C^{k-1}-$graphic. Due to Lemma \ref{lem:ck-gra}, there exists a constant $K_1>0$, such that for any $x\in \cU:=\pi\wt\cU$, the associated backward calibrated curve $\gamma_x^-:(-\infty,0]\rightarrow M$ can be estimated by the following:
\[
\|\gamma_x^-(-t)-z\|\leq K_1 e^{-\mu t},\quad\forall \ t\geq 0
\]
with $\mu>0$ being the largest negative Lyapunov exponent of the hyperbolic equilibrium. Due to our claim and Lemma \ref{lem:ck-gra}, there exists a constant $K_2\geq K_1$, such that for any $x\in M$,  the associated backward calibrated curve $\gamma_x^-:(-\infty,0]\rightarrow M$ satisfies
\be\label{eq:dist}
\|\gamma_x^-(-t)-z\|\leq K_2 e^{-\mu t},\quad\forall \ t\geq 0.
\ee
Due to Theorem C, we can find a sequence of $C^k$ subsolutions $\{u_n\in \cS_c^-\}_{n\in\N}$ approaching to $u^-$ w.r.t. $C^0-$norm. Then for any $x\in M$, and the associated backward calibrated curve (with a time shift) $\eta_x^-:(-\infty,-t]\rightarrow M$ ending with it, we have 
\ben
\bigg|\int_{-\infty}^{-t}e^{\lb s}L(\eta_x^-,\dot\eta_x^-)ds-e^{-\lb t}\alpha\bigg|&=&\lim_{n\rightarrow +\infty}\bigg|\int_{-\infty}^{-t}\frac{d}{ds}\Big(e^{\lb s}u_n\big(\eta_x^-(s)\big)-e^{\lb s}u_n(x_0)\Big)ds\bigg|\\
&=&\lim_{n\rightarrow +\infty}\big|e^{-\lb t}\big(u_n(\eta_x^-(-t))-u_n(x_0)\big)\big|\\
&=&\lim_{n\rightarrow +\infty}e^{-\lb t}\Big|u_n\big(\eta_x^-(-t)\big)-u_n(x_0)\Big|\\
&\leq&\lim_{n\rightarrow +\infty}e^{-\lb t} \|du_n\|\cdot \|\eta_x^-(-t)-x_0\|\\
&\leq& \lim_{n\rightarrow +\infty}e^{-\lb t} \|du_n\|\cdot K_2 e^{-\mu t}\\
&\leq & C\cdot K_2\cdot e^{-(\mu+\lb) t}
\een
where $C>0$ is an available constant due to the uniform semiconcavity of $\{u_n\}_{n\in\N}$. Taking this inequality into both (\ref{eq:leq}) and (\ref{eq:geq}), then we finish the proof.
\qed

\vspace{20pt}

\noindent{\it Proof of Corollary F:} We can totally borrow previous analysis, with only the following adaption: since now $\wt\cA$ is a periodic orbit $\{(\gamma_p(t),\dot\gamma_p(t))|t\in[0,T_p]\}$ which may no longer be an equilibrium, so we can take an arbitrary $x_0\in\cA$ and fix it. Therefore, we can only guarantee the existence of a constant $K_3>0$, such that for any $x\in M$, the associated backward calibrated curve $\gamma_x^-:(-\infty,0]\rightarrow M$  satisfies 
\[
\liminf_{t\rightarrow+\infty}\frac{\|\gamma_x^-(-t)-z\|}{e^{-\mu t}}\leq K_3,\quad\forall \ t\geq 0.
\]
That's different from (\ref{eq:dist}), but other parts of the analysis still holds.
\qed

\vspace{40pt}

\appendix

%
%

\section{Proof of Theorem \ref{thm:0}}\label{a2}

\begin{lem}\label{s ap}\cite[Lemma 2.2]{DFIZ}
Assume $G\in C(T^{\ast}M,\R)$ is fiberwise convex in $p$ and $v$ is a Lipschitz subsolution of equation
$$
G(x,d_xv)=0,\quad x\in M,
$$
then for any $\varepsilon>0$, there exists $v_{\varepsilon}\in C^{\infty}(M,\mathbb{R})$ such that $\|v-v_{\varepsilon}\|_{C^0}\leq\varepsilon$ and $H(x,d_xv)\leq\varepsilon$ for all $x\in M$.
\end{lem}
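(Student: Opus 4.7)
The plan is a Friedrichs-type mollification, resting on two standard ingredients: convexity of $G$ in the momentum variable together with Jensen's inequality, and uniform continuity of $G$ on compact subsets of $T^\ast M$. Since $M$ is compact, fix a finite atlas $\{(U_i,\phi_i)\}$ with subordinate smooth partition of unity $\{\psi_i\}$, and let $L$ denote a global Lipschitz constant for $v$ (so by Rademacher's theorem $d v$ exists a.e.\ and takes values in a compact set of $T^\ast M$). In any chart the pulled-back $w_i:=v\circ \phi_i^{-1}$ is a Lipschitz a.e.-subsolution of $\widetilde G_i(y,q)=0$, where $\widetilde G_i$ is the coordinate expression of $G$, continuous and fiberwise convex. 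Let $\omega_G$ denote a modulus of continuity of $G$ on the compact set of $(x,p)$ with $|p|\le L$.

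Fix a standard mollifier $\rho\in C^\infty_c(\R^n)$, $\rho\ge 0$, $\int\rho=1$, supported in the unit ball, and put $\rho_\delta(z)=\delta^{-n}\rho(z/\delta)$. Define the local smoothing $w_{i,\delta}=w_i\ast\rho_\delta$ on the $\delta$-interior of $\phi_i(U_i)$. Then $\|w_{i,\delta}-w_i\|_{C^0}\le L\delta$, and differentiating under the integral gives
\[
dw_{i,\delta}(y)=\int dw_i(y-z)\,\rho_\delta(z)\,dz,
\]
a convex combination of cotangent vectors in $\overline{B_L(0)}$. Fiberwise convexity of $\widetilde G_i(y,\cdot)$ together with Jensen's inequality yields
\[
\widetilde G_i(y,dw_{i,\delta}(y))\le \int \widetilde G_i(y,dw_i(y-z))\,\rho_\delta(z)\,dz.
\]
By uniform continuity of $\widetilde G_i$, the integrand differs from $\widetilde G_i(y-z,dw_i(y-z))$ by at most $\omega_G(\delta)$ for $z$ in the support of $\rho_\delta$, and the latter is $\le 0$ for a.e.\ $z$. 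Hence $\widetilde G_i(y,dw_{i,\delta}(y))\le \omega_G(\delta)$ uniformly.

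To globalize without introducing commutator terms from partitions of unity, I prefer the embedding route: isometrically embed $M\hookrightarrow \R^N$, fix a tubular neighborhood with smooth nearest-point projection $\pi\colon U\to M$, extend $v$ to $\tilde v:=v\circ \pi$ on $U$ and then Lipschitzly to all of $\R^N$ with the same constant $L$, set $v_\delta=\tilde v\ast\rho_\delta$, and take $v_\varepsilon:=v_\delta|_M\in C^\infty(M,\R)$. The bound $\|v_\varepsilon-v\|_{C^0}\le L\delta$ is immediate from $\int \rho_\delta=1$ and the Lipschitz estimate. For the subsolution bound, expressing $d_xv_\varepsilon(x)$ in a chart produces, up to smooth factors coming from the embedding, exactly the kernel-average of the a.e.-differentials of $\tilde v$ at nearby points of $M$; applying Jensen's inequality in this chart and the uniform continuity estimate above gives
\[
G(x,d_xv_\varepsilon(x))\le \omega_G(C\delta),
\]
with $C$ depending only on the embedding data. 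Choosing $\delta$ so small that $L\delta\le \varepsilon$ and $\omega_G(C\delta)\le \varepsilon$ produces the desired $v_\varepsilon$.

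The main obstacle is making the passage from the Euclidean mollification back to the manifold rigorous: one needs to check that the convex combination of cotangent vectors behind Jensen's inequality is well defined when we translate from ambient differentials in $\R^N$ to intrinsic differentials in $T^\ast M$, and that the $x$-dependence of $G$ contributes only an $O(\delta)$ perturbation at a fixed base point. Both issues are of the standard bookkeeping type handled by working in a single chart with the uniform modulus $\omega_G$, but keeping the geometry and the a.e.\ identity $dv_\delta(x)=\int dv(y)\rho_\delta(x-y)\,dy$ aligned is the delicate point.
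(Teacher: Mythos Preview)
The paper does not prove this lemma; it is quoted verbatim from \cite[Lemma~2.2]{DFIZ}, so there is no in-paper argument to compare against. Your mollification-plus-Jensen strategy is exactly the standard proof given in that reference (and in the related Fathi--Siconolfi work), so the core idea is correct and matches the cited source.

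One comment on your choice of globalization. You avoid the partition-of-unity route because of ``commutator terms'', but those terms are in fact harmless and this is the route taken in \cite{DFIZ}: writing $v_\delta=\sum_i\psi_i\,v_{i,\delta}$ with $v_{i,\delta}$ the chart mollification, one has $dv_\delta=\sum_i\psi_i\,dv_{i,\delta}+\sum_i(d\psi_i)v_{i,\delta}$; since $\sum_i d\psi_i=0$ the second sum equals $\sum_i(d\psi_i)(v_{i,\delta}-v)$, which is $O(\delta)$ and is absorbed by $\omega_G$, while fiberwise convexity handles the first sum directly. By contrast, your preferred embedding route has the subtlety you flag but do not resolve: after mollifying $\tilde v=v\circ\pi$ in $\R^N$, the restriction $d(\tilde v\ast\rho_\delta)(x)|_{T_xM}$ is an average of ambient covectors $d\tilde v(x-z)|_{T_xM}$, and these are not elements of $T^*_{\pi(x-z)}M$ to which the a.e.\ subsolution inequality applies; one needs an additional identification (e.g.\ parallel transport) that contributes a further $O(\delta)$ error. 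This is repairable, but the chart approach is both cleaner and closer to the cited proof.
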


\begin{lem}\label{cp}\cite[Theorem 2.5]{DFIZ}
Assume $H\in C(T^{\ast}M,\R)$ such that $H(x,p)\rightarrow\infty$ as $|p|_{x}\rightarrow\infty$ uniformly in $x\in M$. If $v,u$ are, respectively, a sub and supersolution of \eqref{eq:hj}, then $v\leq u$ in $M$.
\end{lem}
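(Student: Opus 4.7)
The plan is to prove the comparison principle by the classical doubling-of-variables technique from viscosity solution theory, adapted to the discounted equation \eqref{eq:hj}. I would argue by contradiction: suppose $M_\ast := \max_M (v-u) > 0$, where the maximum exists since $M$ is compact and $v, u \in C(M,\R)$.

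For each $\varepsilon > 0$, introduce the auxiliary functional
\[
\Phi_\varepsilon(x,y) = v(x) - u(y) - \frac{1}{2\varepsilon} d(x,y)^2, \qquad (x,y)\in M \times M,
\]
with $d$ the Riemannian distance on $M$, and pick a maximizer $(x_\varepsilon, y_\varepsilon)$. The standard penalization lemma yields $\Phi_\varepsilon(x_\varepsilon, y_\varepsilon) \geq M_\ast$, $d(x_\varepsilon, y_\varepsilon)^2/\varepsilon \to 0$, and every limit point of the diagonal is a maximizer of $v-u$; in particular $v(x_\varepsilon) - u(y_\varepsilon) \to M_\ast$.

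Next I would exploit the viscosity structure. Working in normal coordinates around a limit point, the test function $x \mapsto u(y_\varepsilon) + \frac{1}{2\varepsilon} d(x, y_\varepsilon)^2$ touches $v$ from above at $x_\varepsilon$, so its differential $p_\varepsilon$ lies in $D^+ v(x_\varepsilon)$ and triggers the subsolution inequality
\[
\lb v(x_\varepsilon) + H(x_\varepsilon, p_\varepsilon) \leq 0.
\]
Dually, $y \mapsto v(x_\varepsilon) - \frac{1}{2\varepsilon} d(x_\varepsilon, y)^2$ touches $u$ from below at $y_\varepsilon$, producing $q_\varepsilon \in D^- u(y_\varepsilon)$ and
\[
\lb u(y_\varepsilon) + H(y_\varepsilon, q_\varepsilon) \geq 0.
\]
Up to parallel transport along the minimizing geodesic from $y_\varepsilon$ to $x_\varepsilon$, both $p_\varepsilon$ and $q_\varepsilon$ coincide with this geodesic's initial covector scaled by $\varepsilon^{-1}$, so their difference (after identification) is $o(1)$ as $\varepsilon\to 0$.

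Subtracting the two inequalities gives
\[
\lb \bigl(v(x_\varepsilon) - u(y_\varepsilon)\bigr) \leq H(y_\varepsilon, q_\varepsilon) - H(x_\varepsilon, p_\varepsilon).
\]
The bound $H(x_\varepsilon, p_\varepsilon) \leq \lb \|v\|_\infty$ and the coercivity of $H$ force $\{p_\varepsilon\}$ to remain bounded, and likewise for $\{q_\varepsilon\}$. On this common bounded set $H$ is uniformly continuous, and since $(x_\varepsilon, p_\varepsilon)$ and $(y_\varepsilon, q_\varepsilon)$ share a limit in $T^\ast M$, the right-hand side vanishes in the limit. Passing $\varepsilon \to 0$ then delivers $\lb M_\ast \leq 0$, which contradicts $\lb > 0$ and $M_\ast > 0$. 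I expect the main obstacle to be the manifold-level bookkeeping that ensures $p_\varepsilon$ and $q_\varepsilon$ are quantitatively close as cotangent vectors despite living over distinct (though nearby) base points; this is classically handled via normal coordinates, or more cleanly through the manifold version of Ishii's lemma.
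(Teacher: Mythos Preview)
The paper does not supply a proof of this lemma; it is quoted from \cite[Theorem 2.5]{DFIZ} and used as a black box in Appendix~\ref{a2}. Your doubling-of-variables argument is exactly the standard route to such comparison principles and is the approach of the cited reference, so there is nothing to contrast.

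One small point of care: you write that coercivity bounds $\{p_\varepsilon\}$, ``and likewise for $\{q_\varepsilon\}$.'' The supersolution inequality only yields a \emph{lower} bound $H(y_\varepsilon,q_\varepsilon)\geq -\lb\|u\|_\infty$, so coercivity by itself says nothing about $|q_\varepsilon|$. The actual bound comes from the identification you already recorded a few lines earlier: since $p_\varepsilon$ and $q_\varepsilon$ are the (co)derivatives of the squared-distance penalty, one has $|p_\varepsilon|_{x_\varepsilon}=|q_\varepsilon|_{y_\varepsilon}=d(x_\varepsilon,y_\varepsilon)/\varepsilon$, and boundedness of $p_\varepsilon$ transfers directly to $q_\varepsilon$. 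With that clarification your argument is complete, modulo the manifold bookkeeping (cut-locus avoidance for small $\varepsilon$, parallel-transport identification of $p_\varepsilon$ and $q_\varepsilon$) that you already flag.
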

\vspace{20pt}

\noindent{\it Proof of Theorem \ref{thm:0}:}
 Since $u^{-}$ is a supersolution of \eqref{eq:hj}, based on Lemma \ref{cp}, we obtain that
\[
u^-(x)\geq \sup_{u\in\mathcal{S}^-}u(x)\geq\sup_{u\in C^{\infty}(M,\R)\cap\mathcal{S}^-}u(x).
\]
Since $u^-$ is also a subsolution, we have that $u^-(x)\leq \sup_{u\in\mathcal{S}^-}u(x)$, then
\[
u^-(x)=\sup_{u\in\mathcal{S}^-}u(x).
\]

To prove $u^-(x)=\sup_{u\in C^{\infty}(M,\R)\cap\mathcal{S}^-}u(x) $, it suffices, for any $\delta>0$, to construct a subsolution $U_{\delta}\in C^{\infty}(M,\R)$ of \eqref{eq:hj} such that
$$
u^{-}(x)-\delta\leq U_{\delta}(x)\leq u^{-}(x)\quad\text{  on}\quad M.
$$
Set $\varepsilon=\frac{\lambda}{1+2\lambda}\cdot\delta$ and $v(x)=u^{-}(x)-\frac{1+\lambda}{1+2\lambda}\cdot\delta$,
\[
G(x,p)=\lambda\cdot u^{-}(x)+H(x,p),
\]
then $v$ is a subsolution of $G(x,d_{x}u)=0$. Due to Lemma \ref{s ap}, we obtain $v_{\varepsilon}\in C^{\infty}(M,\R)$ and set $U_{\delta}=v_{\varepsilon}$, thus
\[
u^{-}(x)-\delta = v(x)-\varepsilon\leq U_{\delta}(x)\leq v(x)+\varepsilon=u^{-}(x)-\frac{1}{1+2\lambda}\cdot\delta< u^{-}(x),
\]
and
\begin{align*}
&\lambda U_{\delta}(x)+H(x,dU_{\delta})=\lambda [U_{\delta}(x)-u^{-}(x)]+G(x,dU_{\delta})\\
=&\lambda [v_{\varepsilon}(x)-u^{-}(x)]+G(x,dv_{\varepsilon})\leq-\lambda\cdot\frac{1}{1+2\lambda}\cdot\delta+\varepsilon=0.
\end{align*}
{Since $\dt>0$ is arbitrarily chosen}, we proved the conclusion. 
\qed

\vspace{20pt}

\section{More about the Aubry set}\label{a3}

\vspace{20pt}
In \cite{MS}, their Aubry set is defined by
\[
\wt\cA_1:=\bigcap_{t\geq 0}\Phi_{L,\lb}^{-t}(\wt\Sigma_{L,\lb})
\]
where 
\[
\wt\Sigma_{L,\lb}:=\{(\gamma(0),\lim_{t\rightarrow 0_-}\dot\gamma(t))\in TM|\gamma\in C^{ac}((-\infty,0],M)  \text{ is backward calibrated}\}.
\]
Actually, $\wt\cA_1$ is equal to the following 
\[
\wt\cA_2:=\bigcup_{\gamma \in C^{ac}(\R,M)}\,\,\{(\gamma, \dot\gamma)|\gamma \text{ is globally calibrated by }u^-\}\subset TM,
\]
Here is the proof:
Due to conclusion (a.3) in Sec. 4 of \cite{CS},  $\wt\cA_1 \subset \wt\cA_2$. 
On the other hand, any curve $\gamma :\R\rightarrow M$ globally calibrated by $u^-$ is definitely backward calibrated by $u^-$, so $(\gamma(t),\dot\gamma(t))\in\wt\Sigma_{L,\lb}$ for all $t\in\R$. Furthermore, $(\gamma,\dot\gamma)\in \Phi_{L,\lambda}^{-t}(\wt\Sigma_{L,\lb} )$ for all $t\geq 0$. That implies $(\gamma,\dot\gamma)\in\wt\cA_1$, so $\wt\cA_2\subset\wt\cA_1$ follows.\medskip

To show the equivalence between $\wt\cA_2$ and our definition in (\ref{defn:aub}), it suffices
to prove any curve $\gamma$ globally calibrated by some $u\in\cS^-$ has to be globally calibrated by $u^-$ too. 
Due to Definition \ref{def:Aubry}, for any $a<b\in \R$, we have 
$$
e^{\lb b}u(\gamma(b))-e^{\lb a} u(\gm(a))=\int_a^b e^{\lb t}L(\gamma(t),\dot\gamma(t))dt,
$$
which leads to 
\[
e^{\lb b}u(\gamma(b))=\int_{-\infty}^b e^{\lb t}L(\gamma(t),\dot\gamma(t))dt
\]
by making $a\rightarrow-\infty$. Due to Theorem \ref{thm:0} and (\ref{eq:sta}), 
\[
u(\gamma(b))\leq u^-(\gamma(b))=\inf_{\substack{\eta\in C^{ac}((-\infty,0],M)\\\eta(0)=\gamma(b)}}\int_{-\infty}^0e^{\lb\tau}L(\eta,\dot\eta)d\tau.
\]
Combining these conclusions we get $u(\gamma(b))=u^-(\gamma(b))$ for all $b\in\R$.
So $\gamma:\R\rightarrow M$ is globally calibrated by $u^-$. Finally we get $\wt\cA_2=\wt\cA$.

\begin{prop}[Upper semicontinuity]\label{lem:u-semi}
As a set-valued function,  
\[
 (0,1]\times C^k(TM,\R)\ni (\lb,L)\longrightarrow \wt\cA(L)\subset TM
\] is upper semicontinuous.
\end{prop}
\begin{proof}
First, we prove any accumulating curve of $\{\gamma_n\in\cA(\lb_n)\}$ is contained in  $\cA(\lb_*)$ for $\lb_n\rightarrow\lb_*$ and a fixed $L$. Due to item 2 of Proposition \ref{lem:ms}, for any $\lb_n\in(0,1]$, $\wt\cA(\lb_n)$ is uniformly compact in the phase space. Therefore, for any sequence $\{(\gamma_n,\dot\gamma_n)\}_{n\in\N}$ globally calibrated, the accumulating $(\gamma_*,\dot\gamma_*)$ has to satisfy
\[
\int_t^se^{\lb_*\tau}L(\gamma_*,\dot\gamma_*)d\tau= \lim_{n\rightarrow+\infty}\int_t^se^{\lb_n\tau}L(\gamma_n,\dot\gamma_n)d\tau=\lim_{n\rightarrow+\infty}\int_t^se^{\lb_*\tau}L(\gamma_n,\dot\gamma_n)d\tau
\]
for any $t<s\in\R$. On the other side, for any $\eta\in C^{ac}([t,s],M)$ satisfying $\gamma_*(s)=\eta(s)$ and $\gamma_*(t)=\eta(t)$, we can find a list of enlarging intervals $[t_n,s_n]$ such that 
\[
\lim_{n\rightarrow+\infty}t_n=t \;\text{(resp. $\lim_{n\rightarrow+\infty}s_n=s$)}.
\]
Then a sequence $\eta_n\in C^{ac}([t_n,s_n],M)$ uniformly converging to $\eta$ (as $n\rightarrow+\infty$) is achievable, and we can request that  $\gamma_n(s_n)=\eta_n(s_n)$ and $\gamma_n(t_n)=\eta_n(t_n)$ for all $n\in\N$. Recall that 
\[
\int_{t_n}^{s_n}e^{\lb_n\tau}L(\eta_n,\dot\eta_n)d\tau\geq \int_{t_n}^{s_n}e^{\lb_n\tau}L(\gamma_n,\dot\gamma_n)d\tau,
\]
so we get 
\ben
\int_t^se^{\lb_*\tau}L(\gamma_*,\dot\gamma_*)d\tau&=& \lim_{n\rightarrow+\infty}\int_t^se^{\lb_n\tau}L(\gamma_n,\dot\gamma_n)d\tau\\
&= &\lim_{n\rightarrow+\infty}\int_{t_n}^{s_n}e^{\lb_n\tau}L(\gamma_n,\dot\gamma_n)d\tau\\
&\leq & \lim_{n\rightarrow+\infty}\int_{t_n}^{s_n}e^{\lb_n\tau}L(\eta_n,\dot\eta_n)d\tau\\
&=&\int_t^se^{\lb_*\tau}L(\eta,\dot\eta)d\tau,
\een
which indicates $\gamma_*:\R\rightarrow M$ minimizes {$h_{\lb_*}^{s-t}(\gamma_*(t),\gamma_*(s))$} for any $t<s\in\R$. So  $\gamma_*\in\cA(\lb_*)$.

For fixed $\lb\in (0,1]$ and $L_n\rightarrow L_*$ w.r.t. the $C^k-$norm, the proof is similar.
\end{proof}



\vspace{35pt}

\bibliographystyle{plainurl}
\bibliography{smooth}
\end{document}